\definecolor{Red}{cmyk}{0,1,1,0}
\definecolor{verde}{cmyk}{1,0,1,0}
\definecolor{loka}{cmyk}{.5,0,1,.5}
\definecolor{azul}{cmyk}{1,1,0,0}
\numberwithin{equation}{section}
\newcommand{\be}{\begin{equation}}
\newcommand{\ee}{\end{equation}}
\newtheorem{theorem}{Theorem}
\newtheorem{definition}{Definition}
\newtheorem{lemma}{Lemma}
\newtheorem{corollary}[equation]{Corollary}
\newtheorem{example}{Example}
\newtheorem{remark}{Remark}
\begin{document}
\title{On the Ulam-Hyers-Rassias stability for nonlinear fractional	differential equations using the $\psi$-Hilfer operator}
\author{J. Vanterler da C. Sousa$^1$}
\address{$^1$ Department of Applied Mathematics, Institute of Mathematics,
 Statistics and Scientific Computation, University of Campinas --
UNICAMP, rua S\'ergio Buarque de Holanda 651,
13083--859, Campinas SP, Brazil\newline
e-mail: {\itshape \texttt{ra160908@ime.unicamp.br, capelas@ime.unicamp.br }}}

\author{E. Capelas de Oliveira$^1$}

\begin{abstract} 
We study the existence and uniqueness of solution of a nonlinear Cauchy problem involving the $\psi$-Hilfer fractional derivative. In addition, we discuss the Ulam-Hyers and Ulam-Hyers-Rassias stabilities of its solutions.  A few examples are	presented in order to illustrate the possible applications of our main results.

\vskip.5cm
\noindent
\emph{Keywords}: Existence and Uniqueness, Nonlinear Cauchy problem, Ulam-Hyers stability, Ulam-Hyers-Rassias stability.
\newline 
MSC 2010 subject classifications. 26A33; 34A08; 34A34; 34D20.
\end{abstract}
\maketitle

%%%%%%%%%%%%%%%%%%%%%%%%%%%%%%%%%%%%%%%%%%%%%%%%%%%%%%%%%%%%%%%%%%%%%%%%%%%%%%%%%%%%%%%%%%%%%%%%%%%%%%%%%%%%%%%%%%%%%%%%%%%%%%%%%%%%%%%%%%%%%%%%%%%%%%%%%%%%%

\section{Introduction} 

Fractional calculus, or fractional analysis, a branch of mathematical analysis, is a generalization of classical, integer order differentiation and integration to arbitrary, non integer order \cite{AHMJ,SAM,JOSE2,JOSE1}. The discovery of new physical phenomena and the study of chaotic systems have given rise to the proposition of new fractional differential and integral operators that would allow a better description of such systems \cite{RHM,JEM,ATA,FEE,ROSA}. In this context, Sousa and Oliveira \cite{JOSE2} have recently proposed a fractional differentiation operator, which they called $\psi$-Hilfer operator, that has
the special property of unifying several different fractional operators, that is, of generalizing those fractional operators \cite{JOSE2}.

Fractional differential equations arise naturally in different fields such as biology, engineering, medicine, physics and mathematics \cite{AHMJ,SAM,RHM,JEM,ATA,FEE,ROSA}. Using fractional derivatives to model
phenomena has proved an excellent tool. On the other hand, this also intensifies the studies about the existence and uniqueness of solutions of fractional differential equations with fractional differentiation operators, either with time delay, linear or nonlinear. We refer the reader, for example, to references \cite{EXIS,EXIS1,EXIS2,EXIS3,EXIS5,EXIS6}.

The stability problem of differential equations was formulated and discussed by Ulam and Hyers \cite{EST31,EST32,EST13,EST4}.  Between 1978 and 1988, Rassias established the Ulam-Hyers stability of linear and nonlinear mappings \cite{EST,EST1}. The study of Ulam stability and data dependence of fractional
differential equations was initiated by Wang et al. \cite{ESTF1}. Studies about Ulam-Hyers and Ulam-Hyers-Rassias stability for fractional differential equations can be found in \cite{ESTF1,ESTF,ESTF2,ESTF3}.

In this paper, we consider nonlinear fractional differential equations
\begin{equation}\label{CPT}
\left\{ 
\begin{array}{rcl}
^{H}\mathbb{D}_{a+}^{\alpha ,\beta ;\psi }y\left( t\right)  & = & f\left( t,y\left(
t\right), \text{ }^{H}\mathbb{D}_{a+}^{\alpha ,\beta ;\psi }y\left( t\right) \right)  \\ 
I_{a+}^{1-\gamma ;\psi }y\left( a\right)  & = & y_{a}
\end{array}
\right. 
\end{equation}
where $^{H}\mathbb{D}_{a+}^{\alpha ,\beta ;\psi }(\cdot)$ is the $\psi$-Hilfer fractional derivative \cite{JOSE2} of order $0<\alpha \leq 1$ and type $0\leq \beta \leq 1$, $I^{1-\gamma;\psi}_{a+}(\cdot)$ is the Riemann-Liouville fractional integral of order $1-\gamma$, $\gamma=\alpha+\beta(1-\alpha)$, with respect to function $\psi$ \cite{AHMJ,SAM}, $f:J\times \mathbb{R} \times \mathbb{R} \rightarrow  \mathbb{R} $ is a given function space, $t\in J=\left[ a,T\right]$ with $T>a$ and $y_{a}\in\mathbb{R}$.

The main purpose of this paper is to study the existence and uniqueness of solutions for the nonlinear Cauchy problem Eq.(\ref{CPT}), by means of Banach's contraction principle. In addition, we demonstrate four types
of stability, Ulam-Hyers, generalized Ulam-Hyers, Ulam-Hyers-Rassias and generalized Ulam-Hyers-Rassias stabilities for the fractional differential equation Eq.(\ref{CPT}) in the case $0<\alpha<1$.

The paper is organized as follows: In section 2, we present the one parameter Mittag-Leffler function and some particular cases. We present the definitions of the $\psi$-Hilfer fractional derivative and the Riemann-Liouville fractional integral with respect to a function $\psi$, the spaces in which these fractional operators are defined and Gronwall's inequality, among other results \cite{JOSE}. We also present four definitions of stability Ulam-Hyers, generalized Ulam-Hyers, Ulam-Hyers-Rassias and generalized Ulam-Hyers-Rassias using the $\psi$-Hilfer fractional derivative. In section 3, we study the existence and uniqueness of the solutions of the proposed nonlinear Cauchy problem, using Banach's contraction principle. In section 4, we study the stabilities presented in section 2 and make some important observations about them. In section 5, we present some particular cases of the Cauchy problem we have been studying and discuss their stability according to the criteria of Ulam-Hyers and Ulam-Hyers-Rassias. We also discuss the importance and advantages of using the $\psi$-Hilfer fractional derivative. Concluding remarks close the paper.

%%%%%%%%%%%%%%%%%%%%%%%%%%%%%%%%%%%%%%%%%%%%%%%%%%%%%%%%%%%%%%%%%%%%%%%%%%%%%%%%%%%%%%%%%%%%%%%%%%%%%%%%%%%%%%%%%%%%%%%%%%%%%%%%%%%%%%%%%%%%%%%%%%%%%%%%%%%%%%%%%%%%%%%%%%%%%%%%%%%%%%%%%%%%%%%%%%%%%%%%%%%%%%%%%%%%%%%%%%%%%%%%%%%%%%%%%%%%

\section{Preliminaries}

In this section, we present the definition of the one parameter Mittag-Leffler function and some particular cases. We introduce the definitions of the $\psi$-Hilfer fractional derivative and the Riemann-Liouville fractional integral with respect to a function, the spaces in which they are defined and theorems involving these operators, in particular, Gronwall's inequality. One of the main results of this paper is the study of the stability of the solutions Cauchy's problem, Eq.(\ref{CPT}). We did this for the definitions of Ulam-Hyers, generalized Ulam-Hyers, Ulam-Hyers-Rassias and generalized Ulam-Hyers-Rassias stabilities using the $\psi$-Hilfer fractional derivative.

The classical Mittag-Leffler function is the most important function of fractional calculus, specially in the study of linear fractional differential equations with constant coefficients. It also plays an important role in the study of the stability of the solutions of linear and nonlinear differential equations.  We will deal only with the one parameter Mittag-Leffler function; for Mittag-Leffler functions with two, three and more parameters we suggest the book \cite{GKAM}.

In 1903, Mittag-Leffler \cite{ML1} introduced the classic Mittag-Leffler function with only one complex parameter.

\begin{definition}{\rm \cite{GKAM}}
\label{def1}\rm{(One parameter Mittag-Leffler function)}. The Mittag-Leffler function is given by the series 
\begin{equation} \label{A1}
\mathbb{E}_{\mu }\left( z\right) =\overset{\infty }{\underset{k=0}{\sum }}
\frac{z^{k}}{\Gamma \left( \mu k+1\right) },  
\end{equation}
where $\mu \in \mathbb{C}$, ${Re}\left( \mu \right) >0$ and $\Gamma (z)$ is a
gamma function, given by 
\begin{equation*}
\Gamma \left( z\right) =\int_{0}^{\infty }e^{-t}t^{z-1}dt,
\end{equation*}
$\mbox{Re}\left( z\right) >0$.
\end{definition}

The error function is defined by means of the following integral: 
\begin{equation*}
\text{erf}\left(z\right) =\dfrac{2}{\sqrt{\pi}}\int_{0}^{z}e^{-t^{2}}dt.
\end{equation*}

In particular, if $\mu =1/2$ in Eq.(\ref{A1}), we have 
\begin{eqnarray}\label{A2}
\mathbb{E}_{1/2}\left( z\right)  &=&\overset{\infty }{\underset{k=0}{\sum }}\frac{
z^{k}}{\Gamma \left( \frac{k}{2}+1\right) }=\frac{1}{\Gamma \left( 1\right) }+
\frac{z}{\Gamma \left( \frac{3}{2}\right) }+\frac{z^{2}}{\Gamma \left(
2\right) }+\frac{z^{3}}{\Gamma \left( \frac{5}{2}\right) }+ \\ \notag
&&+\frac{z^{4}}{\Gamma \left( 3\right) }+\frac{z^{5}}{\Gamma \left( \frac{7}{
2}\right) }+\cdot \cdot \cdot +\frac{\left( z^{2}\right) ^{n}}{n!}+\frac{
z^{2n+1}}{\Gamma \left( n+\frac{3}{2}\right) }+\cdot \cdot \cdot 
\end{eqnarray}

Note that, for $k=2n,$ $n\in \mathbb{N},$ we have
\begin{equation}\label{A3}
A\left( z\right) =\overset{\infty }{\underset{k=0}{\sum }}\frac{\left(
z^{2}\right) ^{k}}{k!}=\exp \left( z^{2}\right) .
\end{equation}

On the other hand, for $k=2n+1,$ $n\in \mathbb{N},$ we have
\begin{equation}\label{A4}
B\left( z\right) =\overset{\infty }{\underset{k=0}{\sum }}\frac{z^{2n+1}}{
\Gamma \left( n+\frac{3}{2}\right) }=\exp \left( z^{2}\right) \text{erf}\left( z\right) .
\end{equation}

Then, substituting Eq.(\ref{A3}) and Eq.(\ref{A4}) into Eq.(\ref{A2}), we have
\begin{equation}\label{A5}
\mathbb{E}_{1/2}\left( z\right) =\overset{\infty }{\underset{k=0}{\sum }}\frac{\left(
z^{2}\right) ^{k}}{k!}+\overset{\infty }{\underset{k=0}{\sum }}\frac{z^{2n+1}
}{\Gamma \left( n+\frac{3}{2}\right) }=\exp \left( z^{2}\right) [ 1+\text{erf}\left( z\right)].
\end{equation}

For $z=\lambda ^{\beta },$ in Eq.(\ref{A5}), we have
\begin{equation}\label{A6}
\mathbb{E}_{1/2}\left( \lambda ^{\beta }\right) =\exp \left( \lambda ^{2\beta
}\right) [1+\text{erf}\left( \lambda ^{\beta }\right)].
\end{equation}

Taking the limits $\beta \rightarrow 1$ and $\beta \rightarrow 0$, on both sides of Eq.(\ref{A6}), we have
\begin{equation*}
\mathbb{E}_{1/2}\left( \lambda \right) =\exp \left( \lambda ^{2}\right) [1+\text{erf}\left( \lambda \right)]
\end{equation*}
and
\begin{equation}\label{A7}
\mathbb{E}_{1/2}\left( 1\right) =\exp \left( 1\right) [1+\text{erf}\left(1\right)] \simeq 5.002,
\end{equation}
respectively.

Let $[a,b]$ $(0<a<b<\infty)$ be a finite interval on the half-axis $\mathbb{R}^{+}$ and let $C[a,b]$ be the space of continuous functions $f$ on $[a,b]$ with the norm defined by
\cite{AHMJ}
\begin{equation*}
\left\Vert f\right\Vert _{C\left[ a,b\right] }=\underset{t\in \left[ a,b \right] }{\max }\left\vert f\left( t\right) \right\vert.
\end{equation*}

The weighted space $C_{1-\gamma ;\psi }\left( \left[ a,b\right],\mathbb{R}\right) $ of functions $f$ on $(a,b]$ is defined by
\begin{equation*}
C_{1-\gamma ;\psi }\left[ a,b\right] =\left\{ f:\left( a,b\right] \rightarrow 
\mathbb{R};\text{ }\left( \psi \left( t\right) -\psi \left( a\right) \right)
^{1-\gamma }f\left( t\right) \in C\left[ a,b\right] \right\} ,\text{ }0\leq \gamma <1 , 
\end{equation*}
with the norm
\begin{equation*}
\left\Vert f\right\Vert _{C_{1-\gamma ;\psi }\left[ a,b\right] }=\left\Vert
\left( \psi \left( t\right) -\psi \left( a\right) \right) ^{1-\gamma}f\left(
t\right) \right\Vert _{C\left[ a,b\right] }=\underset{t\in \left[ a,b\right] 
}{\max }\left\vert \left( \psi \left( t\right) -\psi \left( a\right) \right)
^{1-\gamma }f\left( t\right) \right\vert.
\end{equation*}

The weighted space $C_{\gamma ;\psi }^{n}\left[ a,b\right]$ of functions $f$ on $(a,b]$ is defined by
\begin{equation*}
C_{\gamma;\psi }^{n}\left[ a,b\right] =\left\{ f:\left( a,b\right]
\rightarrow \mathbb{R};\text{ }f\left( t\right) \in C^{n-1}\left[ a,b\right] ;\text{ }f^{\left( n\right) }\left( t\right) \in C_{\gamma;\psi }\left[ a,b\right] \right\} ,\text{ }0\leq \gamma <1 ,
\end{equation*}
with the norm
\begin{equation*}
\left\Vert f\right\Vert _{C_{\gamma ;\psi }^{n}\left[ a,b\right] }=\overset{n-1}{\underset{k=0}{\sum }}\left\Vert f^{\left( k\right) }\right\Vert _{C\left[ a,b\right] }+\left\Vert f^{\left( n\right) }\right\Vert _{C_{\gamma ;\psi }\left[ a,b\right] }.
\end{equation*}

For $n=0$, we have, $C_{\gamma,\psi }^{0}\left[ a,b\right] =C_{\gamma,\psi }\left[ a,b\right] $.

The weighted space $C^{\alpha,\beta}_{\gamma,\psi}[a,b]$ is defined by
\begin{equation*}
C_{\gamma ;\psi }^{\alpha ,\beta }\left[ a,b\right] =\left\{ f\in C_{\gamma;\psi }\left[ a,b\right] ;\text{ }^{H}\mathbb{D}_{a+}^{\alpha ,\beta ;\psi }f\in C_{\gamma;\psi }\left[ a,b\right] \right\} ,\text{ }\gamma =\alpha +\beta \left( 1-\alpha\right) .
\end{equation*}

We now present the Riemann-Liouville fractional integral with respect to a function $\psi$ and the $\psi$-Hilfer fractional derivative, recently introduced by Sousa and Oliveira \cite{JOSE2}.

\begin{definition}{\rm \cite{AHMJ}} Let $(a,b)$ $(-\infty \leq a<b \leq \infty)$ be a finite or infinite interval of the real line $\mathbb{R}$ and let $\alpha>0$. Also let $\psi(x)$ be an increasing and positive monotone function on $(a,b]$, having a continuous derivative $\psi'(t)$ on $(a,b)$. The left-sided fractional
integral of a function $f$ with respect to function $\psi$ on $[a,b]$ is defined by
\begin{equation}\label{A}
I_{a+}^{\alpha ;\psi }f\left( t\right) =\frac{1}{\Gamma \left( \alpha
\right) }\int_{a}^{t}\psi ^{\prime }\left( t\right) \left( \psi \left(
t\right) -\psi \left( s\right) \right) ^{\alpha -1}f\left( t\right) ds.
\end{equation}
The right-sided fractional integral is defined in an analogous form {\rm \cite{AHMJ}}.

\end{definition}

\begin{definition}{\rm \cite{JOSE2}} Let $n-1<\alpha <n$ with $n\in\mathbb{N}$; let $I=[a,b]$ be an interval such that $-\infty\leq a<b\leq\infty$ and let $f,\psi\in C^{n}([a,b],\mathbb{R})$ be two functions such that $\psi$ is	increasing and $\psi'(t)\neq 0$, for all $t\in I$. The left-sided $\psi$-Hilfer fractional derivative $^{H}\mathbb{D}_{a+}^{\alpha ,\beta ;\psi }\left( \cdot\right)$ of function $f$, of order $\alpha$ and type $0\leq \beta \leq 1$, is defined by
\begin{equation}\label{HIL}
^{H}\mathbb{D}_{a+}^{\alpha ,\beta ;\psi }f\left( t\right) =I_{a+}^{\beta \left(
n-\alpha \right) ;\psi }\left( \frac{1}{\psi ^{\prime }\left( t\right) }\frac{d}{dt}\right) ^{n}I_{a+}^{\left( 1-\beta \right) \left( n-\alpha \right) ;\psi }f\left( t\right).
\end{equation}

The right-sided $\psi$-Hilfer fractional derivative is defined in an analogous form {\rm \cite{JOSE2}}.

\end{definition}

The $\psi$-Hilfer fractional derivative, as above defined, can be written in
the form

\begin{equation}\label{HIL2}
^{H}\mathbb{D}_{a+}^{\alpha ,\beta ;\psi }f\left( t\right) =I_{a+}^{\gamma -\alpha ;\psi }\mathcal{D}_{a+}^{\gamma ;\psi }f\left( t\right) , 
\end{equation}
with $\gamma =\alpha +\beta \left( n-\alpha \right) $ and where $I^{\gamma-\alpha;\psi}_{a+}(\cdot)$ is the $\psi$-Riemann-Liouville fractional integral and $\mathcal{D}^{\gamma;\psi}_{a+}(\cdot)$ is the $\psi$-Riemann-Liouville derivative \cite{AHMJ,SAM}.  Here we consider the $\psi$-Hilfer fractional derivative for $n=1$.

The following two theorems, will be important throughout the paper.

\begin{theorem}\label{teo1} If $f\in C_{\gamma ,\psi }^{1}[a,b]$, $0<\alpha <1$ and $0\leq \beta \leq 1$, then 
\begin{equation*}
I_{a+}^{\alpha ;\psi }\text{ }^{H}\mathbb{D}_{a+}^{\alpha ,\beta ;\psi
}f\left( t\right) =f\left( t\right) -\frac{\left( \psi \left( x\right) -\psi
\left( a\right) \right) ^{\gamma -1}}{\Gamma \left( \gamma \right) }%
I_{a+}^{\left( 1-\beta \right) \left( 1-\alpha \right) ;\psi }f\left(
a\right) .
\end{equation*}
\end{theorem}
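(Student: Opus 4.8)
The plan is to reduce the identity to the corresponding statement for the $\psi$-Riemann--Liouville calculus and then quote the classical composition rule. The cleanest route uses the change of variables $u=\psi(s)$, which conjugates the whole $\psi$-calculus on $[a,b]$ to the ordinary Riemann--Liouville calculus on $[\psi(a),\psi(b)]$: writing $g:=f\circ\psi^{-1}$ one checks directly from the definitions that
\[
I_{a+}^{\alpha;\psi}f(t)=\bigl(I_{\psi(a)+}^{\alpha}g\bigr)(\psi(t)),\qquad
\Bigl(\tfrac{1}{\psi'(t)}\tfrac{d}{dt}\Bigr)h(t)=\Bigl(\tfrac{d}{du}(h\circ\psi^{-1})\Bigr)(\psi(t)),
\]
so that $^{H}\mathbb{D}_{a+}^{\alpha,\beta;\psi}f(t)=\bigl(^{H}\mathbb{D}_{\psi(a)+}^{\alpha,\beta}g\bigr)(\psi(t))$, the ordinary Hilfer derivative. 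Then the desired identity is exactly the classical composition rule $I_{a+}^{\alpha}\,^{H}\mathbb{D}_{a+}^{\alpha,\beta}g(x) = g(x) - \frac{(x-a)^{\gamma-1}}{\Gamma(\gamma)}\bigl(I_{a+}^{(1-\beta)(1-\alpha)}g\bigr)(a)$ evaluated at $x=\psi(t)$, once one recalls that $1-\gamma=(1-\beta)(1-\alpha)$.

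If instead one prefers to argue directly with the $\psi$-operators, I would proceed in three steps. Step 1: use the representation (\ref{HIL2}), which for $n=1$ reads $^{H}\mathbb{D}_{a+}^{\alpha,\beta;\psi}f=I_{a+}^{\gamma-\alpha;\psi}\mathcal{D}_{a+}^{\gamma;\psi}f$ with $\gamma=\alpha+\beta(1-\alpha)$ and $\gamma-\alpha=\beta(1-\alpha)\ge0$; here the hypothesis $f\in C_{\gamma;\psi}^{1}[a,b]$ is precisely what guarantees that $I_{a+}^{1-\gamma;\psi}f$ is (weighted-)absolutely continuous, hence that $\mathcal{D}_{a+}^{\gamma;\psi}f$ is well defined and lies in $C_{\gamma;\psi}[a,b]$. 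Step 2: apply $I_{a+}^{\alpha;\psi}$ and use the semigroup property $I_{a+}^{\alpha;\psi}I_{a+}^{\gamma-\alpha;\psi}=I_{a+}^{\gamma;\psi}$ (valid since $\alpha>0$ and $\gamma-\alpha\ge0$) to obtain
\[
I_{a+}^{\alpha;\psi}\,^{H}\mathbb{D}_{a+}^{\alpha,\beta;\psi}f
=I_{a+}^{\gamma;\psi}\mathcal{D}_{a+}^{\gamma;\psi}f .
\]
Step 3: invoke the inversion identity for $\psi$-Riemann--Liouville operators of equal order $\gamma\in(0,1)$,
\[
I_{a+}^{\gamma;\psi}\mathcal{D}_{a+}^{\gamma;\psi}f(t)
= f(t)-\frac{(\psi(t)-\psi(a))^{\gamma-1}}{\Gamma(\gamma)}\,\bigl(I_{a+}^{1-\gamma;\psi}f\bigr)(a),
\]
which follows by writing $\mathcal{D}_{a+}^{\gamma;\psi}f=\tfrac{1}{\psi'}\tfrac{d}{dt}\bigl(I_{a+}^{1-\gamma;\psi}f\bigr)$, changing variables $u=\psi(s)$ to land on the classical formula, integrating by parts and using $I^{\gamma}_{a+}I^{1-\gamma}_{a+}=I^{1}_{a+}$. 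Substituting Step 3 into Step 2 and using $1-\gamma=(1-\beta)(1-\alpha)$ yields the claim.

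I expect the only genuine work to be the analytic bookkeeping behind Steps 1--3: checking that membership in $C_{\gamma;\psi}^{1}[a,b]$ is exactly the regularity needed for $I_{a+}^{1-\gamma;\psi}f$ to be differentiable almost everywhere with an integrable derivative, so that $\mathcal{D}_{a+}^{\gamma;\psi}f$ exists pointwise and the semigroup law together with the integration by parts produce no boundary contribution other than the stated one; and handling the integrability of the kernel $(\psi(t)-\psi(s))^{\gamma-1}$ near $s=t$ (where $\gamma-1<0$) in the Fubini argument underlying the semigroup property. All of these become routine once the substitution $u=\psi(\cdot)$ has reduced matters to the ordinary Riemann--Liouville setting, so in the write-up I would lean on that reduction and cite \cite{AHMJ,SAM,JOSE2} for the underlying classical facts.
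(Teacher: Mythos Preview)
Your proposal is correct. The paper itself does not give a proof of this theorem: its entire argument is the one-line reference ``See \cite{JOSE2}.'' So there is nothing substantive to compare against in this paper; you have supplied an actual proof where the authors simply cite their earlier work.

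That said, your three-step direct argument is precisely the standard route used in \cite{JOSE2} and in the analogous classical Hilfer setting: write $^{H}\mathbb{D}_{a+}^{\alpha,\beta;\psi}=I_{a+}^{\gamma-\alpha;\psi}\mathcal{D}_{a+}^{\gamma;\psi}$, use the semigroup property of $I^{\,\cdot\,;\psi}_{a+}$ to collapse $I_{a+}^{\alpha;\psi}I_{a+}^{\gamma-\alpha;\psi}$ to $I_{a+}^{\gamma;\psi}$, and then invoke the $\psi$-Riemann--Liouville inversion formula for order $\gamma\in(0,1)$ together with $1-\gamma=(1-\beta)(1-\alpha)$. Your alternative reduction via the substitution $u=\psi(s)$, conjugating the $\psi$-calculus to the ordinary Riemann--Liouville/Hilfer calculus on $[\psi(a),\psi(b)]$, is a clean and equally valid way to import the classical identity wholesale; it has the advantage of making the analytic bookkeeping (regularity, Fubini near the diagonal) a literal transcription of the well-documented $\psi\equiv\mathrm{id}$ case rather than something to be re-verified.
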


\begin{proof}
See {\rm \cite{JOSE2}}.
\end{proof}

\begin{theorem}\label{teo2}
	Let $f\in C^{1}_{\gamma,\psi}[a,b]$, $\alpha>0$ and $0\leq\beta\leq 1$; then we have
\begin{equation*}
^{H}\mathbb{D}_{a+}^{\alpha ,\beta ;\psi }I_{a+}^{\alpha ;\psi }f\left( t\right) =f\left( t\right).
\end{equation*}
\end{theorem}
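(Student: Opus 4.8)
The plan is to reduce the claim to the composition identity for the underlying $\psi$-Riemann–Liouville operators via the representation in Eq.~(\ref{HIL2}). First I would expand the left-hand side using that representation with $n=1$, so that $\gamma=\alpha+\beta(1-\alpha)$ and
\begin{equation*}
^{H}\mathbb{D}_{a+}^{\alpha,\beta;\psi}I_{a+}^{\alpha;\psi}f(t)
= I_{a+}^{\gamma-\alpha;\psi}\,\mathcal{D}_{a+}^{\gamma;\psi}\,I_{a+}^{\alpha;\psi}f(t).
\end{equation*}
Next I would invoke the semigroup property of the $\psi$-Riemann–Liouville fractional integral, namely $I_{a+}^{\mu;\psi}I_{a+}^{\nu;\psi}=I_{a+}^{\mu+\nu;\psi}$ for $\mu,\nu>0$, which holds for $f\in C_{\gamma,\psi}[a,b]$; combined with the fact that the $\psi$-Riemann–Liouville derivative $\mathcal{D}_{a+}^{\gamma;\psi}$ is a left inverse of $I_{a+}^{\gamma;\psi}$ on the weighted space, I want to write $\mathcal{D}_{a+}^{\gamma;\psi}I_{a+}^{\alpha;\psi}f = \mathcal{D}_{a+}^{\gamma;\psi}I_{a+}^{\gamma;\psi}\big(I_{a+}^{\alpha-\gamma;\psi}f\big)$ when $\alpha\ge\gamma$, or more directly use the known formula $\mathcal{D}_{a+}^{\gamma;\psi}I_{a+}^{\alpha;\psi}f=I_{a+}^{\alpha-\gamma;\psi}f$ valid for $\alpha>\gamma$ (and the ordinary derivative cancellation when needed). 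Since $\gamma-\alpha=\beta(1-\alpha)\ge 0$ and $\alpha-\gamma = -\beta(1-\alpha)\le 0$, one has $\alpha\le\gamma$, so I would instead apply $\mathcal{D}_{a+}^{\gamma;\psi}$ as the derivative operator $\left(\tfrac{1}{\psi'(t)}\tfrac{d}{dt}\right)I_{a+}^{1-\gamma;\psi}$ directly to $I_{a+}^{\alpha;\psi}f$, use $I_{a+}^{1-\gamma;\psi}I_{a+}^{\alpha;\psi}f=I_{a+}^{1-\gamma+\alpha;\psi}f$, and then note $\left(\tfrac{1}{\psi'(t)}\tfrac{d}{dt}\right)I_{a+}^{1-\gamma+\alpha;\psi}f=I_{a+}^{\alpha-\gamma;\psi}f$ provided $1-\gamma+\alpha>0$, which is clear. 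This yields $\mathcal{D}_{a+}^{\gamma;\psi}I_{a+}^{\alpha;\psi}f=I_{a+}^{\alpha-\gamma;\psi}f=I_{a+}^{-\beta(1-\alpha);\psi}f$ — but a negative-order integral is really a derivative, so it is cleaner to keep it as $\left(\tfrac{1}{\psi'(t)}\tfrac{d}{dt}\right)I_{a+}^{1-\gamma+\alpha;\psi}f$ and then apply the outer integral.

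Putting the pieces back together, I would compute
\begin{equation*}
I_{a+}^{\gamma-\alpha;\psi}\,\mathcal{D}_{a+}^{\gamma;\psi}\,I_{a+}^{\alpha;\psi}f(t)
= I_{a+}^{\gamma-\alpha;\psi}\left(\frac{1}{\psi'(t)}\frac{d}{dt}\right)I_{a+}^{1-\gamma+\alpha;\psi}f(t)
= I_{a+}^{\gamma-\alpha;\psi}\,\mathcal{D}_{a+}^{\gamma-\alpha+\,?;\psi}\!\big(\,\cdots\big),
\end{equation*}
and then recognize the right-hand combination as $I_{a+}^{\gamma-\alpha;\psi}\,{}^{H}\mathbb{D}_{a+}^{\gamma-\alpha,\cdot;\psi}$ acting trivially, or more simply note that $I_{a+}^{\gamma-\alpha;\psi}\left(\tfrac{1}{\psi'(t)}\tfrac{d}{dt}\right)I_{a+}^{(1-\gamma+\alpha);\psi} = I_{a+}^{\gamma-\alpha;\psi}\mathcal{D}_{a+}^{\gamma-\alpha;\psi}$ applied to $I_{a+}^{\alpha;\psi}f$ after regrouping the integral exponents, which telescopes to the identity by Theorem~\ref{teo1} applied with order $\gamma-\alpha$ and the appropriate type — or, most directly, by the classical Riemann–Liouville cancellation $I_{a+}^{\mu;\psi}\mathcal{D}_{a+}^{\mu;\psi}g=g$ for $g$ in the relevant weighted space with vanishing weighted boundary term. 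The boundary term here vanishes because $g=I_{a+}^{\alpha;\psi}f$ has $I_{a+}^{1-\mu;\psi}g\big|_{t=a}=0$ whenever $\alpha>0$ and $f\in C_{\gamma,\psi}[a,b]$, since $(\psi(t)-\psi(a))^{1-\mu}$ times a fractional integral of positive order tends to $0$ as $t\to a^+$.

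The main obstacle I anticipate is the careful bookkeeping of which weighted space each intermediate function lives in, so that every application of the semigroup law $I_{a+}^{\mu;\psi}I_{a+}^{\nu;\psi}=I_{a+}^{\mu+\nu;\psi}$ and of the left-inverse property of $\mathcal{D}_{a+}^{\mu;\psi}$ is legitimate — in particular checking that $I_{a+}^{\alpha;\psi}f\in C_{\gamma,\psi}^{1}[a,b]$ (or at least in the domain of $\mathcal{D}_{a+}^{\gamma;\psi}$) when $f\in C_{\gamma,\psi}^{1}[a,b]$, and that all the auxiliary boundary terms at $t=a$ genuinely vanish because of the positive-order integral smoothing. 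Once those mapping properties are in hand, the computation is a short telescoping of exponents: $\gamma-\alpha$, then $+( 1-\gamma)$, then the derivative, then $+\alpha$, reassembling to $I_{a+}^{0;\psi}f=f$. Thus I would present the proof as (i) rewrite via Eq.~(\ref{HIL2}); (ii) cite the $\psi$-integral semigroup property and the $\psi$-Riemann–Liouville left-inverse identity; (iii) combine exponents and kill the boundary term using positivity of the integration orders; (iv) conclude $^{H}\mathbb{D}_{a+}^{\alpha,\beta;\psi}I_{a+}^{\alpha;\psi}f(t)=f(t)$.
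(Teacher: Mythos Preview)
The paper does not actually prove this statement: its ``proof'' consists solely of the citation ``See \cite{JOSE2}.'' So you are not being compared against an argument in the present paper but against the original source, and your outline---rewrite via Eq.~(\ref{HIL2}), use the semigroup law for $I_{a+}^{\mu;\psi}$, and reduce to a Riemann--Liouville left-inverse identity with a vanishing boundary term---is precisely the standard route taken there.

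That said, your execution in the middle paragraph is tangled and in one place wrong. After the semigroup step you correctly reach
\[
I_{a+}^{\gamma-\alpha;\psi}\left(\frac{1}{\psi'(t)}\frac{d}{dt}\right)I_{a+}^{1-\gamma+\alpha;\psi}f
= I_{a+}^{\gamma-\alpha;\psi}\,\mathcal{D}_{a+}^{\gamma-\alpha;\psi}f,
\]
since $1-\gamma+\alpha = 1-(\gamma-\alpha)$. But you then write that this is ``$I_{a+}^{\gamma-\alpha;\psi}\mathcal{D}_{a+}^{\gamma-\alpha;\psi}$ applied to $I_{a+}^{\alpha;\psi}f$ after regrouping,'' which is incorrect: the factor $I_{a+}^{\alpha;\psi}$ has already been absorbed by the semigroup, and the composite acts on $f$ itself. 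Consequently your boundary-term argument is aimed at the wrong function. What you actually need is that $I_{a+}^{1-(\gamma-\alpha);\psi}f(t)\to 0$ as $t\to a^{+}$; this holds because $f\in C^{1}_{\gamma,\psi}[a,b]$ is in particular continuous on $[a,b]$, hence bounded, and $1-(\gamma-\alpha)=1-\beta(1-\alpha)>0$ forces the integral to vanish at the endpoint. The detour through negative-order integrals and the display with the ``$?$'' should simply be deleted. Once you streamline to the chain $I^{\gamma-\alpha}\mathcal{D}^{\gamma}I^{\alpha}f = I^{\gamma-\alpha}\mathcal{D}^{\gamma-\alpha}f = f$, the proof is two lines plus the boundary check.
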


\begin{proof}
See {\rm \cite{JOSE2}}.
\end{proof}

Recently, Sousa and Oliveira \cite{JOSE2} introduced a new class of fractional derivatives and integrals and, using a result on Gronwall's inequality \cite{JOSE}, obtained the corresponding class of Gronwall's inequalities. 

Their results allow us to state the following Gronwall's inequality involving the $\psi$-Riemann-Liouville fractional integral. 

\begin{theorem}\label{teo3} Let $u,$ $v$ be two integrable functions and $g$ a 	continuous function, with domain $\left[ a,b\right] .$ Let $\psi \in C^{1}\left[ a,b\right] $ an increasing function such that $\psi
^{\prime }\left( t\right) \neq 0$, $\forall t\in \left[ a,b\right]$. Assume that
\begin{enumerate}
\item $u$ and $v$ are nonnegative;
\item $g$ in nonnegative and nondecreasing.
\end{enumerate}
If
\begin{equation*}
u\left( t\right) \leq v\left( t\right) +g\left( t\right) \int_{a}^{t}\psi ^{\prime }\left( s\right) \left( \psi \left( t\right) -\psi \left(s \right) \right) ^{\alpha -1}u\left( s \right) ds,
\end{equation*}
then
\begin{equation}\label{jose}
u\left( t\right) \leq v\left( t\right) +\int_{a}^{t}\overset{\infty }{\underset{k=1}{\sum }}\frac{\left[ g\left( t\right) \Gamma \left( \alpha \right) \right] ^{k}}{\Gamma \left( \alpha k\right) }\psi ^{\prime }\left(
s \right) \left[ \psi \left( t\right) -\psi \left( s\right) \right]^{\alpha k-1}v\left( s\right) ds,
\end{equation}
$\forall t\in \left[ a,b\right]$.
\end{theorem}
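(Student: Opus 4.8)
The plan is to adapt the classical argument for the generalized Gronwall inequality (Ye--Gao--Ding) to the $\psi$-Riemann--Liouville setting, in the spirit of \cite{JOSE}. First I would introduce the linear operator
\begin{equation*}
(B\phi)(t)=g(t)\int_a^t\psi'(s)\big(\psi(t)-\psi(s)\big)^{\alpha-1}\phi(s)\,ds ,
\end{equation*}
so that the hypothesis reads $u\le v+Bu$ on $[a,b]$. Since $g\ge 0$ and $\psi'>0$, the operator $B$ is monotone on nonnegative functions, so iterating $u\le v+Bu$ gives
\begin{equation*}
u(t)\le \sum_{k=0}^{n-1}(B^{k}v)(t)+(B^{n}u)(t),\qquad t\in[a,b] .
\end{equation*}

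The core step is the inductive estimate that, for every $k\ge 1$,
\begin{equation*}
(B^{k}\phi)(t)\le \frac{[g(t)\,\Gamma(\alpha)]^{k}}{\Gamma(\alpha k)}\int_a^t\psi'(s)\big(\psi(t)-\psi(s)\big)^{\alpha k-1}\phi(s)\,ds .
\end{equation*}
The case $k=1$ is just the definition of $B$. For the induction step I would substitute the bound for $B^{k}$ into $B^{k+1}\phi=B(B^{k}\phi)$, use that $g$ is nondecreasing to replace $g(s)^{k}$ by $g(t)^{k}$ for $s\le t$, interchange the order of integration (legitimate by Tonelli, since all integrands are nonnegative), and then evaluate the inner integral
\begin{equation*}
\int_{\tau}^{t}\psi'(s)\big(\psi(t)-\psi(s)\big)^{\alpha-1}\big(\psi(s)-\psi(\tau)\big)^{\alpha k-1}\,ds=\big(\psi(t)-\psi(\tau)\big)^{\alpha(k+1)-1}\,\frac{\Gamma(\alpha)\Gamma(\alpha k)}{\Gamma(\alpha(k+1))},
\end{equation*}
which reduces to the Euler Beta integral after the change of variables $\psi(s)=\psi(\tau)+\big(\psi(t)-\psi(\tau)\big)r$. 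Collecting the constants yields the claim at level $k+1$.

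It then remains to show that the remainder vanishes, i.e.\ $(B^{n}u)(t)\to 0$ as $n\to\infty$, uniformly on $[a,b]$. With $M=\max_{[a,b]}g$ and $K=\psi(b)-\psi(a)>0$, for $n$ large enough that $\alpha n\ge 1$ the inductive bound and $\psi(t)-\psi(s)\le K$ give
\begin{equation*}
0\le (B^{n}u)(t)\le \frac{[M\,\Gamma(\alpha)\,K^{\alpha}]^{n}}{\Gamma(\alpha n)\,K}\int_a^b\psi'(s)\,u(s)\,ds ,
\end{equation*}
and the right-hand side tends to $0$: the integral is finite because $u$ is integrable and $\psi'$ is continuous, hence bounded, on $[a,b]$, while $[M\Gamma(\alpha)K^{\alpha}]^{n}/\Gamma(\alpha n)\to 0$ as the general term of a convergent series of Mittag--Leffler type. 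Letting $n\to\infty$ in the iterated inequality, using monotone convergence to move the sum inside the integral, and keeping the $k=0$ term $v(t)$ outside, one recovers exactly \eqref{jose}. I expect the main obstacle to be the bookkeeping in the induction step — extracting $g(t)$ correctly via the monotonicity of $g$ and verifying the $\psi$-Beta identity through the nonlinear substitution; once the bound on $B^{k}$ is established, the decay of the remainder and the interchange of sum and integral are routine.
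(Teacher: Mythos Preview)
Your argument is correct and is essentially the standard Ye--Gao--Ding iteration adapted to the $\psi$-kernel: define the monotone operator $B$, prove the bound on $B^{k}$ by induction via the $\psi$-Beta identity (using that $g$ is nondecreasing to pull $g(s)^{k}\le g(t)^{k}$ outside), show $B^{n}u\to 0$ from $\Gamma(\alpha n)$ growth, and pass to the limit by monotone convergence. The paper does not give its own proof but simply refers to \cite{JOSE}, where exactly this argument is carried out; your write-up matches that approach.
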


\begin{proof}
See {\rm \cite{JOSE}}.
\end{proof}

\begin{corollary}{\rm \cite{JOSE}} Let $\alpha>0$, $I=[a,b]$ and $\psi\in C^{1}([a,b],\mathbb{R})$ a function such that $\psi$ is increasing and $\psi'(t)\neq 0$ for all $t\in I$. Suppose that $b\geq 0$, $v$ is a nonnegative function locally integrable on $\left[ a,b\right]$ and $u$ is nonnegative and locally integrable on $\left[a,b\right] $ with 
\begin{equation*}
u\left( t\right) \leq v\left( t\right) +b\int_{a}^{t}\psi ^{\prime }\left(s \right) \left[ \psi \left( t\right) -\psi \left( s\right) \right] ^{\alpha -1}u\left(s\right) ds, \text{ }\forall t\in[a,b].
\end{equation*}

Then, we can write
\begin{equation*}
u\left( t\right) \leq v\left( t\right) +\int_{a}^{t}\underset{k=1}{\overset{\infty }{\sum }}\frac{\left[ b\Gamma \left( \alpha \right) \right] ^{k}}{\Gamma \left( \alpha k\right) }\psi ^{\prime }\left(s\right) \left[
\psi \left( t\right) -\psi \left(s\right) \right] ^{\alpha k-1}v\left(s\right) ds, \text{ }\forall t\in[a,b].
\end{equation*}
\end{corollary}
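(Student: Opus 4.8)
The plan is to obtain this statement as an immediate specialization of Theorem \ref{teo3}. In that theorem, take $u$ and $v$ exactly as given here and choose $g(t) = b$ for every $t \in [a,b]$. The hypotheses of Theorem \ref{teo3} are then all satisfied: $u$ and $v$ are nonnegative and (locally) integrable by assumption, while the constant function $g \equiv b$ is nonnegative because $b \geq 0$ and is trivially nondecreasing. Moreover, the integral inequality assumed in the Corollary,
\begin{equation*}
u(t) \leq v(t) + b\int_{a}^{t}\psi'(s)\left[\psi(t)-\psi(s)\right]^{\alpha-1}u(s)\,ds,
\end{equation*}
is precisely the hypothesis of Theorem \ref{teo3} with this choice of $g$.

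First I would invoke Theorem \ref{teo3}, which yields
\begin{equation*}
u(t) \leq v(t) + \int_{a}^{t}\sum_{k=1}^{\infty}\frac{\left[g(t)\Gamma(\alpha)\right]^{k}}{\Gamma(\alpha k)}\psi'(s)\left[\psi(t)-\psi(s)\right]^{\alpha k-1}v(s)\,ds, \qquad \forall t \in [a,b].
\end{equation*}
Then I would substitute $g(t) = b$ into the right-hand side; since $b$ is constant, the coefficients $\left[b\Gamma(\alpha)\right]^{k}/\Gamma(\alpha k)$ no longer depend on $t$, and the asserted conclusion follows verbatim. There is no real obstacle in this route beyond observing that a nonnegative constant is nonnegative and nondecreasing; the substance of the argument is already packaged in Theorem \ref{teo3}.

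If instead one wished for a proof independent of Theorem \ref{teo3}, the natural approach is Picard-type iteration. Writing $T\phi(t) := b\,\Gamma(\alpha)\,I_{a+}^{\alpha;\psi}\phi(t)$, the hypothesis reads $u \leq v + Tu$, and since $T$ is monotone on nonnegative functions, iterating gives $u \leq \sum_{k=0}^{n-1} T^{k}v + T^{n}u$ for every $n$. Using the semigroup property $I_{a+}^{\alpha;\psi}I_{a+}^{\beta;\psi} = I_{a+}^{\alpha+\beta;\psi}$ of the $\psi$-Riemann-Liouville integral one computes $T^{k}v = \left[b\Gamma(\alpha)\right]^{k}I_{a+}^{\alpha k;\psi}v$, which reproduces exactly the series appearing in the statement. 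The only delicate point in this alternative is to show that the remainder $T^{n}u(t) = \left[b\Gamma(\alpha)\right]^{n}I_{a+}^{\alpha n;\psi}u(t)$ tends to $0$ as $n \to \infty$, uniformly on each compact subinterval of $[a,b]$; this follows from the factorial-type growth of $\Gamma(\alpha n)$ dominating $\left[b\Gamma(\alpha)(\psi(t)-\psi(a))^{\alpha}\right]^{n}$, together with local integrability of $u$, and it is precisely the step where one must be careful not to over-use boundedness hypotheses on $u$.
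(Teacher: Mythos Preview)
Your proposal is correct. The paper does not actually supply a proof of this corollary; it merely states the result with a citation to \cite{JOSE}, relying on its placement immediately after Theorem~\ref{teo3} to signal that it is the obvious specialization. Your first argument---take $g(t)\equiv b$ in Theorem~\ref{teo3} and note that a nonnegative constant is continuous, nonnegative and nondecreasing---is precisely that specialization, so you have filled in exactly the step the paper leaves implicit. The alternative Picard-iteration sketch you give is also sound (and is in fact how Theorem~\ref{teo3} itself is proved in \cite{JOSE}), but it is not needed here.
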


\begin{corollary} Under the hypotheses of {\rm Theorem \ref{teo3}},	let $v$ be a nondecreasing function on $\left[ a,b\right] $. Then, we have
\begin{equation*}
u\left( t\right) \leq v\left( t\right) \mathbb{E}_{\alpha }\left( g\left( t\right) \Gamma \left( \alpha \right) \left[ \psi \left( t\right) -\psi \left(s\right) \right] ^{\alpha }\right) ,\text{ }\forall t\in \left[ a,b\right],
\end{equation*}
where $\mathbb{E}_{\alpha }\left( \cdot \right) $ is the Mittag-Leffler function defined by {\rm Eq.(\ref{A1})}.
\end{corollary}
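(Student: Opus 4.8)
The plan is to derive this directly from the conclusion of Theorem~\ref{teo3}, collapsing the infinite series in Eq.~(\ref{jose}) by exploiting the monotonicity of $v$. First I would use that $v$ is nondecreasing: for $a\le s\le t$ one has $v(s)\le v(t)$, so from Eq.~(\ref{jose}),
\[
u(t)\le v(t)+v(t)\int_{a}^{t}\sum_{k=1}^{\infty}\frac{[g(t)\Gamma(\alpha)]^{k}}{\Gamma(\alpha k)}\,\psi'(s)\,[\psi(t)-\psi(s)]^{\alpha k-1}\,ds .
\]
Next I would interchange summation and integration (allowed since every term is nonnegative, or alternatively because, for $t$ fixed, the series converges uniformly in $s$ thanks to the fast growth of $\Gamma(\alpha k)$) and evaluate each integral by the change of variables $\tau=\psi(t)-\psi(s)$, $d\tau=-\psi'(s)\,ds$, obtaining
\[
\int_{a}^{t}\psi'(s)\,[\psi(t)-\psi(s)]^{\alpha k-1}\,ds=\frac{[\psi(t)-\psi(a)]^{\alpha k}}{\alpha k}.
\]

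Substituting this and using the identity $\alpha k\,\Gamma(\alpha k)=\Gamma(\alpha k+1)$, the $k$-th term becomes $\big(g(t)\Gamma(\alpha)[\psi(t)-\psi(a)]^{\alpha}\big)^{k}/\Gamma(\alpha k+1)$, so that the sum over $k\ge 1$ equals
\[
\sum_{k=1}^{\infty}\frac{\big(g(t)\Gamma(\alpha)[\psi(t)-\psi(a)]^{\alpha}\big)^{k}}{\Gamma(\alpha k+1)}=\mathbb{E}_{\alpha}\!\big(g(t)\Gamma(\alpha)[\psi(t)-\psi(a)]^{\alpha}\big)-1,
\]
since the $k=0$ term of the Mittag-Leffler series Eq.~(\ref{A1}) is $1/\Gamma(1)=1$. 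Plugging this back gives $u(t)\le v(t)+v(t)\big(\mathbb{E}_{\alpha}(\cdot)-1\big)=v(t)\,\mathbb{E}_{\alpha}\big(g(t)\Gamma(\alpha)[\psi(t)-\psi(a)]^{\alpha}\big)$, which is the assertion (the factor $[\psi(t)-\psi(s)]^{\alpha}$ in the displayed statement should read $[\psi(t)-\psi(a)]^{\alpha}$, the free variable $s$ being a typographical slip).

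The computation is otherwise routine; the one point requiring care is the justification of the term-by-term integration. One should also note that $g$ continuous and $\psi$ continuous on $[a,b]$ make the argument of $\mathbb{E}_{\alpha}$ finite for each fixed $t$, so the series on the right-hand side converges and the bound is meaningful.
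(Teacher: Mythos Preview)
Your argument is correct and is precisely the standard derivation: bound $v(s)$ by $v(t)$ using monotonicity, swap sum and integral by nonnegativity, evaluate the elementary $\psi$-integral, and recognize the Mittag-Leffler series. The paper itself gives no proof here---it simply cites the companion reference \cite{JOSE}---so there is nothing to compare beyond noting that your computation is exactly what that reference contains. Your observation that the displayed $[\psi(t)-\psi(s)]^{\alpha}$ should read $[\psi(t)-\psi(a)]^{\alpha}$ is also correct.
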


\begin{proof} 
See {\rm \cite{JOSE}}.
\end{proof}

In the theory of fractional differential equations there are several types of Ulam stability. We present here the Ulam-Hyers stability, the Ulam-Hyers-Rassias stability and their respective generalizations using the $\psi$-Hilfer fractional derivative.

\begin{definition} The {\rm Eq.(\ref{CPT})} is Ulam-Hyers stable if there exists a real number $c_{f}>0$ such that for each $\varepsilon>0$ and for each solution $z\in C^{1}_{1-\gamma,\psi}(J,\mathbb{R})$ of the inequality
\begin{equation}\label{CPT1}
\left\vert ^{H}\mathbb{D}_{a+}^{\alpha ,\beta ;\psi }z\left( t\right) -f\left(
t,z\left( t\right) ,^{H}\mathbb{D}_{a+}^{\alpha ,\beta ;\psi }z\left( t\right)
\right) \right\vert \leq \varepsilon, \text{ } t\in J
\end{equation}
there exists a solution $y\in C^{1}_{1-\gamma,\psi}(J,\mathbb{R})$ of
{\rm Eq.(\ref{CPT})}, such that 
\begin{equation*}
\left\vert z\left( t\right) -y\left( t\right) \right\vert \leq
c_{f}\varepsilon, \text{ } t\in J.
\end{equation*}
\end{definition}

\begin{definition} The {\rm Eq.(\ref{CPT})} is generalized Ulam-Hyers stable if there exists $\Phi_{f}\in C(\mathbb{R_{+}},\mathbb{R_{+}})$, $\Phi_{f}(0)=0$, such that for each solution $z\in	C^{1}_{1-\gamma,\psi}(J,\mathbb{R})$ of the inequality {\rm	Eq.(\ref{CPT1})} there exists a solution $y\in C^{1}_{1-\gamma,\psi}(J,\mathbb{R})$ of {\rm Eq.(\ref{CPT})} such that
\begin{equation*}
\left\vert z\left( t\right) -y\left( t\right) \right\vert \leq
\Phi_{f}\varepsilon, \text{ } t\in J.
\end{equation*}
\end{definition}

\begin{definition} The {\rm Eq.(\ref{CPT})} is Ulam-Hyers-Rassias stable with respect to $\varphi\in C(J,\mathbb{R})$, if there exists a real number $c_{f}>0$ such that for each $\varepsilon>0$ and for each solution $z\in C^{1}_{1-\gamma,\psi}(J,\mathbb{R})$ of the inequality
\begin{equation}\label{CPT2}
\left\vert ^{H}\mathbb{D}_{a+}^{\alpha ,\beta ;\psi }z\left( t\right) -f\left(
t,z\left( t\right) ,^{H}\mathbb{D}_{a+}^{\alpha ,\beta ;\psi }z\left( t\right)
\right) \right\vert \leq \varepsilon \varphi(t), \text{ } t\in J
\end{equation}
there exists a solution $y\in C^{1}_{1-\gamma,\psi}(J,\mathbb{R})$ of {\rm Eq.(\ref{CPT})}, 
such that 
\begin{equation*}
\left\vert z\left( t\right) -y\left( t\right) \right\vert \leq
c_{f}\varepsilon \varphi(t), \text{ } t\in J.
\end{equation*}
\end{definition}

\begin{definition} The {\rm Eq.(\ref{CPT})} is generalized Ulam-Hyers-Rassias stable with respect to $\varphi\in C(J,\mathbb{R_{+}})$, if there exists a real number $c_{f,\varphi}>0$ such that for each solution $z\in C^{1}_{1-\gamma,\psi}(J,\mathbb{R})$ of the inequality
\begin{equation}\label{CPT3}
\left\vert ^{H}\mathbb{D}_{a+}^{\alpha ,\beta ;\psi }z\left( t\right) -f\left(
t,z\left( t\right) ,^{H}\mathbb{D}_{a+}^{\alpha ,\beta ;\psi }z\left( t\right)
\right) \right\vert \leq \varphi(t), \text{ } t\in J,
\end{equation}
there exists a solution $y\in C^{1}_{1-\gamma,\psi}(J,\mathbb{R})$ of
{\rm Eq.(\ref{CPT})} such that 
\begin{equation*}
\left\vert z\left( t\right) -y\left( t\right) \right\vert \leq
c_{f,\varphi}\varphi(t), \text{ } t\in J.
\end{equation*}
\end{definition}

\begin{remark}
A function $z\in C^{1}_{1-\gamma,\psi}(J,\mathbb{R})$ is a solution of the inequality {\rm Eq.(\ref{CPT1})}  if and only if there exists a function $g\in C(J,\mathbb{R})$ such that
\begin{itemize}
\item $\left\vert g\left( t\right) \right\vert \leq \varepsilon $.
\item $^{H}\mathbb{D}_{a+}^{\alpha ,\beta ;\psi }z\left( t\right) =f\left( t,z\left(
t\right) ,^{H}\mathbb{D}_{a+}^{\alpha ,\beta ;\psi }z\left( t\right) \right) +g\left(
t\right) ,\text{ }t\in J$.
\end{itemize}
\end{remark}

\section{Existence and uniqueness of solutions to $\psi$-Hilfer nonlinear
fractional differential equations} 

The nonlinear Cauchy-type problem using fractional derivatives has been shown to be of great interest to the academic community, not only mathematicians but also researchers in other fields. The study of the existence and uniqueness of solutions of Cauchy-type problem has an interesting way of obtaining this result, namely, through the integral equation. In this section, we present and prove a lemma which guarantees the equivalence between the Volterra integral equation and the nonlinear Cauchy problem Eq.(\ref{CPT}). In the sequence, using Banach's contraction principle, we present the proof of the first main result of this paper, the existence and uniqueness of solutions to the nonlinear Cauchy problem.

\begin{lemma}\label{lema1} Let a function $f\left( t,u,v\right) :J\times \mathbb{R} \times \mathbb{R}\rightarrow \mathbb{R}$ be continuous. Then the problem {\rm Eq.(\ref{CPT})} is equivalent to the problem
\begin{equation}\label{casa}
y\left( t\right) =\frac{\left( \psi \left( t\right) -\psi \left( a\right)
\right) ^{\gamma -1}}{\Gamma \left( \gamma \right) }y_{a}+I_{a+}^{\alpha
;\psi }g\left( t\right)
\end{equation}
where $g\in C\left( J,\mathbb{R}\right) $ satisfies the functional equation
\begin{equation*}
g\left( t\right) =f\left( t,\frac{\left( \psi \left( t\right) -\psi \left(
a\right) \right) ^{\gamma -1}}{\Gamma \left( \gamma \right) }%
y_{a}+I_{a+}^{\alpha ;\psi }g\left( t\right) ,g\left( t\right) \right).
\end{equation*}
\end{lemma}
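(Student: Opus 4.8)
The strategy is a standard two-direction equivalence argument using the composition identities for the $\psi$-Hilfer operator, namely Theorem~\ref{teo1} and Theorem~\ref{teo2}. First I would assume $y\in C^{1}_{1-\gamma,\psi}(J,\mathbb{R})$ solves Eq.(\ref{CPT}), set $g(t):={}^{H}\mathbb{D}_{a+}^{\alpha,\beta;\psi}y(t)$, and observe that $g\in C(J,\mathbb{R})$ because $y\in C^{\alpha,\beta}_{\gamma,\psi}[a,b]$ by hypothesis, so that the first equation of Eq.(\ref{CPT}) reads $g(t)=f(t,y(t),g(t))$. Then I would apply the fractional integral operator $I_{a+}^{\alpha;\psi}$ to both sides of ${}^{H}\mathbb{D}_{a+}^{\alpha,\beta;\psi}y(t)=g(t)$ and invoke Theorem~\ref{teo1} to get
\begin{equation*}
y(t)=\frac{(\psi(t)-\psi(a))^{\gamma-1}}{\Gamma(\gamma)}\,I_{a+}^{(1-\beta)(1-\alpha);\psi}y(a)+I_{a+}^{\alpha;\psi}g(t).
\end{equation*}
To match Eq.(\ref{casa}) I then need to identify $I_{a+}^{(1-\beta)(1-\alpha);\psi}y(a)$ with $y_{a}$; this follows from the initial condition $I_{a+}^{1-\gamma;\psi}y(a)=y_{a}$ together with $1-\gamma=(1-\beta)(1-\alpha)$ (a direct consequence of $\gamma=\alpha+\beta(1-\alpha)$). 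Substituting the expression for $y(t)$ into $g(t)=f(t,y(t),g(t))$ gives the claimed functional equation for $g$.

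For the converse, I would start from Eq.(\ref{casa}) with $g\in C(J,\mathbb{R})$ satisfying the functional equation, and apply ${}^{H}\mathbb{D}_{a+}^{\alpha,\beta;\psi}$ to both sides. The term $\dfrac{(\psi(t)-\psi(a))^{\gamma-1}}{\Gamma(\gamma)}y_{a}$ is annihilated by the $\psi$-Hilfer derivative (this is the standard fact that ${}^{H}\mathbb{D}_{a+}^{\alpha,\beta;\psi}$ kills the weight $(\psi(t)-\psi(a))^{\gamma-1}$, which can be checked from the representation Eq.(\ref{HIL2}) or is recorded in \cite{JOSE2}), and the second term gives ${}^{H}\mathbb{D}_{a+}^{\alpha,\beta;\psi}I_{a+}^{\alpha;\psi}g(t)=g(t)$ by Theorem~\ref{teo2}. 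Hence ${}^{H}\mathbb{D}_{a+}^{\alpha,\beta;\psi}y(t)=g(t)=f(t,y(t),g(t))=f(t,y(t),{}^{H}\mathbb{D}_{a+}^{\alpha,\beta;\psi}y(t))$, which is the differential equation. To recover the initial condition, I would apply $I_{a+}^{1-\gamma;\psi}$ to Eq.(\ref{casa}), use the semigroup property $I_{a+}^{1-\gamma;\psi}I_{a+}^{\alpha;\psi}g=I_{a+}^{1-\gamma+\alpha;\psi}g$ (which vanishes at $t=a$ since $1-\gamma+\alpha>0$ and $g$ is bounded), and use $I_{a+}^{1-\gamma;\psi}\dfrac{(\psi(t)-\psi(a))^{\gamma-1}}{\Gamma(\gamma)}=1$ to conclude $I_{a+}^{1-\gamma;\psi}y(a)=y_{a}$.

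\textbf{Main obstacle.} The only genuinely delicate points are the regularity bookkeeping — verifying that $g$ as defined lies in $C(J,\mathbb{R})$ and that $y$ defined by Eq.(\ref{casa}) lies in the space $C^{1}_{1-\gamma,\psi}(J,\mathbb{R})$ where the operators act legitimately — and the evaluation of the boundary terms $I_{a+}^{(1-\beta)(1-\alpha);\psi}y(a)$ and $I_{a+}^{1-\gamma;\psi}y(a)$, which requires care because these are limits as $t\to a^{+}$ of fractional integrals of functions with an integrable singularity at $a$. These limit computations rely on known asymptotics of $I_{a+}^{\mu;\psi}$ applied to the weight $(\psi(t)-\psi(a))^{\gamma-1}$, essentially the identity $I_{a+}^{\mu;\psi}(\psi(s)-\psi(a))^{\gamma-1}=\dfrac{\Gamma(\gamma)}{\Gamma(\gamma+\mu)}(\psi(t)-\psi(a))^{\gamma+\mu-1}$, which I would cite from \cite{AHMJ,JOSE2} rather than reprove. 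Everything else is a routine application of Theorems~\ref{teo1} and \ref{teo2} and the linearity of the operators involved.
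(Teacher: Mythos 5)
Your proposal is correct and follows essentially the same route as the paper: apply $I_{a+}^{\alpha;\psi}$ together with Theorem~\ref{teo1} and the identification $(1-\beta)(1-\alpha)=1-\gamma$ for one direction, and apply ${}^{H}\mathbb{D}_{a+}^{\alpha,\beta;\psi}$ with Theorem~\ref{teo2} and the annihilation of $(\psi(t)-\psi(a))^{\gamma-1}$ for the converse. If anything, you are more careful than the paper, which omits the verification of the initial condition in the converse direction and the regularity bookkeeping you flag.
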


\begin{proof} Applying the fractional integral operator $I_{a+}^{\alpha ;\psi }\left( \cdot \right) $ on both sides of the fractional equation Eq.(\ref{CPT}) and using Theorem \ref{teo1}, we get
\begin{equation*}
y\left( t\right) -\frac{\left( \psi \left( t\right) -\psi \left( a\right) \right) ^{\gamma -1}}{\Gamma \left( \gamma \right) }I_{a+}^{\left( 1-\beta \right) \left( 1-\alpha \right) ;\psi }y\left( a\right) =I_{a+}^{\alpha
;\psi }g\left( t\right). 
\end{equation*}

Therefore,
\begin{equation}\label{casa1}
y\left( t\right) =\frac{\left( \psi \left( t\right) -\psi \left( a\right)\right) ^{\gamma -1}}{\Gamma \left( \gamma \right) }y_{a}+I_{a+}^{\alpha;\psi }g\left( t\right) .
\end{equation}

On the other hand, if $y$ satisfies Eq.(\ref{casa1}), then it satisfies Eq.(\ref{CPT}). However, applying the fractional derivative	$^{H}\mathbb{D}_{a+}^{\alpha ,\beta ;\psi }\left( \cdot \right) $ on both sides of the Eq.(\ref{CPT}) and using  Theorem \ref{teo2}, we	have
\begin{eqnarray*}
^{H}\mathbb{D}_{a+}^{\alpha ,\beta ;\psi }y\left( t\right)  &=&^{H}\mathbb{D}_{a+}^{\alpha,\beta ;\psi }\frac{\left( \psi \left( t\right) -\psi \left( a\right)\right) ^{\gamma -1}}{\Gamma \left( \gamma \right) }y_{a}+\text{ }^{H}\mathbb{D}_{a+}^{\alpha ,\beta ;\psi }I_{a+}^{\alpha ;\psi }g\left( t\right)  \\&=&g\left( t\right),
\end{eqnarray*}
where, for $0<\gamma <1$, we have $^{H}\mathbb{D}_{a+}^{\alpha ,\beta ;\psi
	}\left(\psi \left( t\right) -\psi \left( a\right) \right) ^{\gamma
	-1}=0$ \cite{JOSE2,EXIS}.
\end{proof}

\begin{theorem}\label{teo4} We assume the following hypotheses:
\begin{enumerate}
\item{\rm (H1)} The function $f:J\times \mathbb{R}\times \mathbb{R}\rightarrow \mathbb{R}$ is continuous.
\item {\rm (H2)} There exist constants $k>0$ and $l>0$ such that
\begin{equation*}
\left\vert f\left( t,u,v\right) -f\left( \overline{t},\overline{u},\overline{v}\right) \right\vert \leq k\left\vert u-\overline{u}\right\vert+l\left\vert v-\overline{v}\right\vert 
\end{equation*}
for any $u,v,\overline{u},\overline{v}\in \mathbb{R}$ and $t\in J$. 
\end{enumerate}
If
\begin{equation}\label{casa2}
\left( \frac{k}{\Gamma \left( \alpha +1\right) }\left( \psi \left( t\right)
-\psi \left( a\right) \right) ^{\alpha }+l\right) <1
\end{equation}
then there exists a unique solution for the Cauchy problem Eq.(\ref{CPT}) on $J$.
\end{theorem}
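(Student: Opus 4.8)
The plan is to recast the Cauchy problem Eq.(\ref{CPT}) as a fixed point problem and then apply Banach's contraction principle. By Lemma \ref{lema1}, solving Eq.(\ref{CPT}) is equivalent to finding $g \in C(J,\mathbb{R})$ satisfying the functional equation
\begin{equation*}
g(t) = f\!\left(t,\frac{(\psi(t)-\psi(a))^{\gamma-1}}{\Gamma(\gamma)}y_a + I_{a+}^{\alpha;\psi}g(t),\, g(t)\right),
\end{equation*}
after which $y$ is recovered via Eq.(\ref{casa}). So first I would define an operator $\mathcal{T}$ on a suitable complete metric space whose fixed points are exactly these functions $g$. A natural choice is to work on $C(J,\mathbb{R})$ (or, to match the regularity claimed in the statement, the weighted space $C_{1-\gamma;\psi}(J,\mathbb{R})$ for $y$, tracking $g$ in $C(J,\mathbb{R})$) equipped with the supremum norm, which is complete.

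Second, I would verify that $\mathcal{T}$ is well-defined, i.e. that $\mathcal{T}g \in C(J,\mathbb{R})$ whenever $g \in C(J,\mathbb{R})$; this follows from hypothesis (H1) (continuity of $f$) together with the fact that $I_{a+}^{\alpha;\psi}g$ is continuous on $J$ and the weight $(\psi(t)-\psi(a))^{\gamma-1}y_a/\Gamma(\gamma)$ is continuous on $(a,T]$ — here one should be slightly careful near $t=a$, which is exactly why the weighted space is the right setting. Then comes the core estimate: for $g_1, g_2 \in C(J,\mathbb{R})$, using (H2),
\begin{equation*}
|\mathcal{T}g_1(t) - \mathcal{T}g_2(t)| \leq k\,\big|I_{a+}^{\alpha;\psi}g_1(t) - I_{a+}^{\alpha;\psi}g_2(t)\big| + l\,|g_1(t) - g_2(t)|.
\end{equation*}
Bounding $|I_{a+}^{\alpha;\psi}(g_1-g_2)(t)| \leq \|g_1-g_2\|_\infty \cdot \frac{(\psi(t)-\psi(a))^\alpha}{\Gamma(\alpha+1)}$ (which is the standard computation $I_{a+}^{\alpha;\psi}[1](t) = (\psi(t)-\psi(a))^\alpha/\Gamma(\alpha+1)$), and taking the supremum over $t \in J$ where $\psi(t)-\psi(a) \leq \psi(T)-\psi(a)$, yields
\begin{equation*}
\|\mathcal{T}g_1 - \mathcal{T}g_2\|_\infty \leq \left(\frac{k}{\Gamma(\alpha+1)}(\psi(T)-\psi(a))^\alpha + l\right)\|g_1 - g_2\|_\infty.
\end{equation*}

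Third, condition Eq.(\ref{casa2}) — read with the maximal value $t = T$ in mind — gives that the constant in parentheses is strictly less than $1$, so $\mathcal{T}$ is a contraction. Banach's fixed point theorem then provides a unique $g \in C(J,\mathbb{R})$, hence via Eq.(\ref{casa}) a unique solution $y$ of Eq.(\ref{CPT}) on $J$; one checks $y \in C_{1-\gamma;\psi}^{1}(J,\mathbb{R})$ using the mapping properties of $I_{a+}^{\alpha;\psi}$ and Theorem \ref{teo2}. The main obstacle I anticipate is not the contraction estimate itself, which is routine, but rather the treatment of the term involving $v = {}^{H}\mathbb{D}_{a+}^{\alpha,\beta;\psi}y = g$: because $f$ depends on the fractional derivative of the unknown, the fixed-point formulation must be set up in terms of $g$ rather than $y$ directly, and the contraction constant genuinely picks up the extra additive term $l$ from this dependence. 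One must also be attentive to whether Eq.(\ref{casa2}) is meant pointwise in $t$ or at $t=T$; for the argument to close cleanly it should be interpreted at the endpoint (equivalently, as a bound on $\sup_{t\in J}$), and I would state this explicitly.
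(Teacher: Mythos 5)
Your proposal matches the paper's proof in all essentials: the paper likewise sets up the fixed-point problem for the auxiliary function $z={}^{H}\mathbb{D}_{a+}^{\alpha,\beta;\psi}y$ (your $g$), derives the contraction estimate with constant $\frac{k}{\Gamma(\alpha+1)}(\psi(T)-\psi(a))^{\alpha}+l$ by bounding $I_{a+}^{\alpha;\psi}[1]$, and invokes Banach's principle before recovering $y$ via the integral representation. The only cosmetic difference is that the paper phrases the estimate in the weighted norm of $C_{1-\gamma,\psi}(J,\mathbb{R})$ rather than the plain supremum norm, and it too silently evaluates the smallness condition at $t=T$, exactly as you anticipated.
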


\begin{proof} Define the operator $M:C_{1-\gamma ,\psi }\left( J,\mathbb{R}\right) \rightarrow C_{1-\gamma ,\psi }\left( J,\mathbb{R} \right) $ by
\begin{equation}\label{casa3}
Mz\left( t\right) =f\left( t,\frac{\left( \psi \left( t\right) -\psi \left( a\right) \right) ^{\gamma -1}}{\Gamma \left( \gamma \right) }y_{a}+\frac{1}{\Gamma \left( \alpha \right) }\int_{a}^{t}\psi ^{\prime }\left( s\right)
\left( \psi \left( t\right) -\psi \left( s\right) \right) ^{\alpha
-1}z\left( s\right) ds,z\left( s\right) \right) 
\end{equation}
for each $t\in J.$

Let $u,w\in C_{1-\gamma ,\psi }\left( J,\mathbb{R} \right)$. Then for $t\in J=\left[ a,T\right] $ and using hypothesis (H1), we have
\begin{eqnarray}\label{casa4}
&&\left\vert \left( \psi \left( t\right) -\psi \left( a\right) \right)
^{1-\gamma }\left( Mu\left( t\right) -Mw\left( t\right) \right) \right\vert 
\nonumber \\
&\leq &\frac{k}{\Gamma \left( \alpha \right) }\int_{a}^{t}\psi ^{\prime
}\left( s\right) \left( \psi \left( t\right) -\psi \left( s\right) \right)
^{\alpha -1}\left\vert \left( \psi \left( t\right) -\psi \left( a\right)
\right) ^{1-\gamma }\left( u\left( s\right) -w\left( s\right) \right)
\right\vert ds  \nonumber \\
&&+l\left\vert \left( \psi \left( t\right) -\psi \left( a\right) \right)
^{1-\gamma }\left( u\left( t\right) -w\left( t\right) \right) \right\vert  
\nonumber \\
&\leq &\frac{k\left\Vert u-w\right\Vert _{C_{1-\gamma ,\psi }}}{\Gamma
\left( \alpha \right) }\int_{a}^{t}\psi ^{\prime }\left( s\right) \left(
\psi \left( t\right) -\psi \left( s\right) \right) ^{\alpha -1}ds+l\left\Vert
u-w\right\Vert _{C_{1-\gamma ,\psi }}  \nonumber \\
&\leq &\left[ \frac{k}{\Gamma \left( \alpha +1\right) }\left( \psi \left(
T\right) -\psi \left( a\right) \right) ^{\alpha }+l\right] \left\Vert
u-w\right\Vert _{C_{1-\gamma ,\psi }}.
\end{eqnarray}

Evaluating the maximum values, for $t\in \left[ a,b\right]$, of both sides of Eq.(\ref{casa4}) and using the definition of the norm in the weighted	space, we get 
\begin{equation*}
\left\Vert Mu-Mw\right\Vert _{C_{1-\gamma ,\psi }}\leq \left[ \frac{k}{\Gamma \left( \alpha +1\right) }\left( \psi \left( T\right) -\psi \left(a\right) \right) ^{\alpha }+l\right] \left\Vert u-w\right\Vert _{C_{1-\gamma,\psi }}.
\end{equation*}

It follows from this result and Eq.(\ref{casa2}) that operator $M$ is a	contraction, then, we conclude that operator $M$ has a unique fixed	point $z\in C_{1-\gamma,\psi }\left( J,\mathbb{R}\right) $, given by
Banach's contraction principle.

Therefore,
\begin{equation*}
z\left( t\right) =f\left( t,y\left( t\right) ,z\left( s\right) \right) ,
\end{equation*}
for each $t\in J$, where
\begin{equation*}
y\left( t\right) =\frac{\left( \psi \left( t\right) -\psi \left( a\right) \right) ^{\gamma -1}}{\Gamma \left( \gamma \right) }y_{a}+\frac{1}{\Gamma \left( \alpha \right) }\int_{a}^{t}\psi ^{\prime }\left( s\right) \left( \psi \left( t\right) -\psi \left( s\right) \right) ^{\alpha -1}z\left( s\right) ds.
\end{equation*}

This implies that $^{H}\mathbb{D}_{a+}^{\alpha ,\beta ;\psi }y\left(
	t\right)=z\left( t\right) $. Consequently, 
\begin{equation*}
^{H}\mathbb{D}_{a+}^{\alpha ,\beta ;\psi }y\left( t\right) =f\left( t,y\left(t\right),^{H}\mathbb{D}_{a+}^{\alpha ,\beta ;\psi }y\left( t\right) \right) .
\end{equation*}
\end{proof}

%%%%%%%%%%%%%%%%%%%%%%%%%%%%%%%%%%%%%%%%%%%%%%%%%%%%%%%%%%%%%%%%%%%%%%%%%%%%%%%%%%%%%%%%%%%%%%%%%%%%%%%%%%%%%%%%%%%%%%%%%%%%%%%%%%%%%%%%%%%%%%%%%%%%%%%%%%%%%%%%%%%%%%%%%%%%%%%%%%%%%%%%%%%%%%%%%%%%%%%%%%%%%%%%%%%%%%%%%%%%%%%%%%%%%%%%%%%%
\section{Ulam-Hyers and Ulam-Hyers-Rassias stabilities}

In this section, we present and prove two theorems showing that the nonlinear Cauchy problem Eq.(\ref{CPT}) using $\psi$-Hilfer fractional derivative admits Ulam-Hyers and Ulam-Hyers-Rassias stabilities.

The first theorem is about Ulam-Hyers stability.

\begin{theorem}\label{teo5} Suppose the validity of {\rm (H1)},
{\rm (H2)} and {\rm Eq.(\ref{casa2})}. Then {\rm Eq.(\ref{CPT})} is Ulam-Hyers stable.
\end{theorem}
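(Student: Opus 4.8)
The plan is to exploit the equivalence between the Cauchy problem and the Volterra integral equation established in Lemma \ref{lema1}, together with the Gronwall-type inequality of Theorem \ref{teo3}. First I would take an arbitrary $\varepsilon>0$ and a function $z\in C^{1}_{1-\gamma,\psi}(J,\mathbb{R})$ satisfying the inequality Eq.(\ref{CPT1}). By the Remark following the stability definitions, there is a $g\in C(J,\mathbb{R})$ with $|g(t)|\leq\varepsilon$ such that $^{H}\mathbb{D}_{a+}^{\alpha,\beta;\psi}z(t)=f(t,z(t),{}^{H}\mathbb{D}_{a+}^{\alpha,\beta;\psi}z(t))+g(t)$. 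Applying $I_{a+}^{\alpha;\psi}$ to this identity and using Theorem \ref{teo1} (exactly as in the proof of Lemma \ref{lema1}), I would obtain
\begin{equation*}
z(t)=\frac{(\psi(t)-\psi(a))^{\gamma-1}}{\Gamma(\gamma)}I_{a+}^{(1-\beta)(1-\alpha);\psi}z(a)+I_{a+}^{\alpha;\psi}\big(f(\cdot,z,{}^{H}\mathbb{D}_{a+}^{\alpha,\beta;\psi}z)\big)(t)+I_{a+}^{\alpha;\psi}g(t).
\end{equation*}
Meanwhile, by Theorem \ref{teo4} there is a unique solution $y$ of Eq.(\ref{CPT}), which by Lemma \ref{lema1} satisfies $y(t)=\frac{(\psi(t)-\psi(a))^{\gamma-1}}{\Gamma(\gamma)}y_{a}+I_{a+}^{\alpha;\psi}\big(f(\cdot,y,{}^{H}\mathbb{D}_{a+}^{\alpha,\beta;\psi}y)\big)(t)$; I would choose the initial datum so that $I_{a+}^{(1-\beta)(1-\alpha);\psi}z(a)=y_{a}$, so that the boundary terms cancel on subtraction.

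Next I would subtract the two representations and estimate $|z(t)-y(t)|$. The term $I_{a+}^{\alpha;\psi}g(t)$ is bounded in absolute value by $\varepsilon(\psi(T)-\psi(a))^{\alpha}/\Gamma(\alpha+1)$. For the difference of the $f$-terms I would use the Lipschitz hypothesis (H2): writing $w(t):=|z(t)-y(t)|$ and noting that the derivative terms satisfy $|{}^{H}\mathbb{D}_{a+}^{\alpha,\beta;\psi}z(t)-{}^{H}\mathbb{D}_{a+}^{\alpha,\beta;\psi}y(t)|\leq\frac{k}{\Gamma(\alpha)}\int_{a}^{t}\psi'(s)(\psi(t)-\psi(s))^{\alpha-1}w(s)\,ds+l\,|{}^{H}\mathbb{D}_{a+}^{\alpha,\beta;\psi}z(t)-{}^{H}\mathbb{D}_{a+}^{\alpha,\beta;\psi}y(t)|$, so that by $l<1$ (which follows from Eq.(\ref{casa2})) the derivative difference is itself controlled by $\frac{k}{(1-l)\Gamma(\alpha)}\int_{a}^{t}\psi'(s)(\psi(t)-\psi(s))^{\alpha-1}w(s)\,ds$. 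Substituting this back, I would arrive at an inequality of the form
\begin{equation*}
w(t)\leq \frac{\varepsilon(\psi(T)-\psi(a))^{\alpha}}{\Gamma(\alpha+1)}+\frac{k}{(1-l)\Gamma(\alpha)}\Big(1+\frac{l}{1-l}\Big)\int_{a}^{t}\psi'(s)(\psi(t)-\psi(s))^{\alpha-1}w(s)\,ds,
\end{equation*}
i.e. $w(t)\leq v+C\int_{a}^{t}\psi'(s)(\psi(t)-\psi(s))^{\alpha-1}w(s)\,ds$ with $v=\varepsilon(\psi(T)-\psi(a))^{\alpha}/\Gamma(\alpha+1)$ constant and $C$ an explicit constant depending only on $k,l,\alpha$.

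Finally I would apply Theorem \ref{teo3} (or the Corollary with nondecreasing $v$) to this inequality: since $v$ is a constant it is nondecreasing, and the Corollary gives $w(t)\leq v\,\mathbb{E}_{\alpha}\big(C\Gamma(\alpha)(\psi(T)-\psi(a))^{\alpha}\big)$ for all $t\in J$. This is of the form $|z(t)-y(t)|\leq c_{f}\varepsilon$ with
\begin{equation*}
c_{f}=\frac{(\psi(T)-\psi(a))^{\alpha}}{\Gamma(\alpha+1)}\,\mathbb{E}_{\alpha}\big(C\Gamma(\alpha)(\psi(T)-\psi(a))^{\alpha}\big),
\end{equation*}
a positive constant independent of $\varepsilon$ and of $z$, which is exactly what the definition of Ulam-Hyers stability requires. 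The main obstacle I anticipate is handling the implicit dependence of $f$ on the fractional derivative ${}^{H}\mathbb{D}_{a+}^{\alpha,\beta;\psi}z$ — getting a clean Gronwall-ready inequality requires first isolating and bounding the difference of the derivative terms using $l<1$, and one must be careful that the constant $C$ stays finite and that all the integrals make sense in the weighted space $C_{1-\gamma,\psi}$; the rest is a routine Gronwall argument.
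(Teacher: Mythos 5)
Your proposal is correct and follows essentially the same route as the paper: pass to the integral representation via Lemma \ref{lema1} and Theorem \ref{teo1}, bound the perturbation term by $\varepsilon(\psi(T)-\psi(a))^{\alpha}/\Gamma(\alpha+1)$, use $l<1$ from Eq.(\ref{casa2}) to absorb the implicit derivative difference (the paper's Eq.(\ref{casa8})), and close with the $\psi$-Gronwall inequality to obtain a Mittag--Leffler constant $c_{f}$. Your intermediate bound on the derivative difference should read $k|z(t)-y(t)|+l|{}^{H}\mathbb{D}_{a+}^{\alpha,\beta;\psi}z(t)-{}^{H}\mathbb{D}_{a+}^{\alpha,\beta;\psi}y(t)|+\varepsilon$ rather than containing the integral of $w$, and your constant $C$ carries a spurious factor, but these bookkeeping slips do not affect the structure or validity of the argument.
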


\begin{proof}
Let $z\in C_{1-\gamma ,\psi }\left( J,\mathbb{R}\right) $ be a solution of 
inequality {\rm Eq.(\ref{CPT1})}, i.e.,
\begin{equation}\label{casa5}
\left\vert ^{H}\mathbb{D}_{a+}^{\alpha ,\beta ;\psi }z\left( t\right) -f\left(
t,z\left( t\right) ,^{H}\mathbb{D}_{a+}^{\alpha ,\beta ;\psi }z\left( t\right)
\right) \right\vert \leq \varepsilon ,\text{ }t\in J.
\end{equation}

	Let us denote by $y\in C_{1-\gamma ,\psi }\left( J,\mathbb{R}\right) $ the unique solution of the Cauchy problem, so that 
\begin{equation*}
^{H}\mathbb{D}_{a+}^{\alpha ,\beta ;\psi }y\left( t\right) =f\left( t,y\left(t\right) ,^{H}\mathbb{D}_{a+}^{\alpha ,\beta ;\psi }y\left( t\right) \right)
\end{equation*}
for each $t\in J,$ $0<\alpha \leq 1$ and $\ 0\leq \beta \leq 1;$ $I_{a+}^{\left( 1-\beta \right) \left( 1-\alpha \right) ;\psi }y\left(a\right) =I_{a+}^{\left( 1-\beta \right) \left( 1-\alpha \right) ;\psi}z\left( a\right)$.

Using Lemma \ref{lema1}, we have
\begin{equation*}
y\left( t\right) =\frac{\left( \psi \left( t\right) -\psi \left( a\right) \right) ^{\gamma -1}}{\Gamma \left( \gamma \right) }y_{a}+\frac{1}{\Gamma \left( \alpha \right) }\int_{a}^{t}\psi ^{\prime }\left( s\right) \left(
\psi \left( t\right) -\psi \left( s\right) \right) ^{\alpha -1}g_{y}\left( s\right) ds,
\end{equation*}
where $g_{y}\in C_{1-\gamma ,\psi }\left( J,\mathbb{R}\right)$ satisfies the functional equation
\begin{equation*}
g_{y}\left( t\right) =f\left( t,\frac{\left( \psi \left( t\right) -\psi\left( a\right) \right) ^{\gamma -1}}{\Gamma \left( \gamma \right) }y_{a}+I_{a+}^{\alpha ;\psi }g_{y}\left( t\right) ,g_{y}\left( t\right)\right) .
\end{equation*}

Applying operator $I_{a+}^{\alpha ;\psi }\left( \cdot \right) $ on both sides of Eq.(\ref{casa5}) and using Theorem \ref{teo1}, we have
\begin{equation*}
\left\vert I_{a+}^{\alpha ;\psi }\text{ }^{H}\mathbb{D}_{a+}^{\alpha ,\beta ;\psi }z\left( t\right) -I_{a+}^{\alpha ;\psi }\text{ }^{H}f\left( t,z\left( t\right), ^{H}\mathbb{D}_{a+}^{\alpha ,\beta ;\psi }z\left( t\right) \right)
\right\vert \leq I_{a+}^{\alpha ;\psi }\varepsilon;
\end{equation*}
this implies that 
\begin{eqnarray*}
&&\left\vert z\left( t\right) -\frac{\left( \psi \left( t\right) -\psi \left( a\right) \right) ^{\gamma -1}}{\Gamma \left( \gamma \right) }z_{a}-\frac{1}{ \Gamma \left( \alpha \right) }\int_{a}^{t}\psi ^{\prime }\left( s\right)
\left( \psi \left( t\right) -\psi \left( s\right) \right) ^{\alpha -1}g_{z}\left( s\right) ds\right\vert\notag \\
&\leq& \varepsilon I_{a+}^{\alpha ;\psi}1.
\end{eqnarray*}

Hence, we obtain
\begin{eqnarray}\label{casa6}
&&\left\vert z\left( t\right) -\frac{\left( \psi \left( t\right) -\psi \left(a\right) \right) ^{\gamma -1}}{\Gamma \left( \gamma \right) }z_{a}-\frac{1}{\Gamma \left( \alpha \right) }\int_{a}^{t}\psi ^{\prime }\left( s\right)
\left( \psi \left( t\right) -\psi \left( s\right) \right) ^{\alpha-1}g_{z}\left( s\right) ds\right\vert \notag \\
&\leq & \frac{\varepsilon \left( \psi\left( T\right) -\psi \left( a\right) \right) ^{\alpha }}{\Gamma \left(\alpha +1\right) },
\end{eqnarray}
where $g_{z}\in C_{1-\gamma ,\psi }\left( J,\mathbb{R} \right) $ satisfies the functional equation
\begin{equation*}
g_{z}\left( t\right) =f\left( t,\frac{\left( \psi \left( t\right) -\psi \left( a\right) \right) ^{\gamma -1}}{\Gamma \left( \gamma \right) } z_{a}+I_{a+}^{\alpha ;\psi }g_{z}\left( t\right) ,g_{z}\left( t\right)\right) .
\end{equation*}

On the other hand, we have, for each $t\in J$,
\begin{eqnarray}\label{casa7}
&&\left\vert z\left( t\right) -y\left( t\right) \right\vert \notag \\
&\leq &\left\vert z\left( t\right) -\frac{\left( \psi \left( t\right) -\psi \left( a\right) \right) ^{\gamma -1}}{\Gamma \left( \gamma \right) }z_{a}-\frac{1}{\Gamma \left( \alpha \right) }\int_{a}^{t}\psi ^{\prime }\left( s\right) \left( \psi \left( t\right) -\psi \left( s\right) \right) ^{\alpha -1}g_{z}\left( s\right) ds\right\vert  \notag \\ &&+\frac{1}{\Gamma \left( \alpha \right) }\int_{a}^{t}\psi ^{\prime }\left( s\right) \left( \psi \left( t\right) -\psi \left( s\right) \right) ^{\alpha -1}\left\vert g_{z}\left( s\right) -g_{y}\left( s\right) \right\vert ds
\end{eqnarray}
where
\begin{equation*}
g_{y}\left( t\right) =f\left( t,y\left( t\right) ,g_{y}\left( t\right)\right)
\end{equation*}
and
\begin{equation*}
g_{z}\left( t\right) =f\left( t,z\left( t\right) ,g_{z}\left( t\right)\right) .
\end{equation*}

Using hypothesis {\rm (H2)} and the two equalities above for $g_{z}(\cdot)$ and $g_{y}(\cdot)$, we have, for each $t\in J$,
\begin{equation*}
\left\vert g_{z}\left( t\right) -g_{y}\left( t\right) \right\vert \leq k\left\vert z\left( t\right) -y\left( t\right) \right\vert +l\left\vert g_{z}\left( t\right) -g_{y}\left( t\right) \right\vert,
\end{equation*}
which can be written as
\begin{equation}\label{casa8}
\left\vert g_{z}\left( t\right) -g_{y}\left( t\right) \right\vert \leq \frac{k}{1-l}\left\vert z\left( t\right) -y\left( t\right) \right\vert .
\end{equation}

Using Eq.(\ref{casa6}), Eq.(\ref{casa7}), Eq.(\ref{casa8}) and Theorem \ref{teo3}, we have
\begin{eqnarray}\label{casa9}
&&\left\vert z\left( t\right) -y\left( t\right) \right\vert \notag \\
&\leq &\frac{\varepsilon \left( \psi \left( T\right) -\psi \left( a\right) \right) ^{\alpha }}{\Gamma \left( \alpha +1\right) }+\frac{k}{\left( 1-l\right) \Gamma \left( \alpha \right) }\int_{a}^{t}\psi ^{\prime }\left( s\right)
\left( \psi \left( t\right) -\psi \left( s\right) \right) ^{\alpha-1}\left\vert z\left( s\right) -y\left( s\right) \right\vert ds  \notag \\
&\leq &\frac{\varepsilon \left( \psi \left( T\right) -\psi \left( a\right)\right) ^{\alpha }}{\Gamma \left( \alpha +1\right) }\left[ 1+\int_{a}^{t}\underset{n=1}{\overset{\infty }{\sum }}\left( \frac{k}{1-l}\right) ^{n}
\frac{1}{\Gamma \left( n\alpha \right) }\psi ^{\prime }\left( s\right)\left( \psi \left( t\right) -\psi \left( s\right) \right) ^{n\alpha -1}ds\right]  \notag \\
&\leq &\frac{\varepsilon \left( \psi \left( T\right) -\psi \left( a\right)\right) ^{\alpha }}{\Gamma \left( \alpha +1\right) }\left[ 1+\underset{n=1}{\overset{\infty }{\sum }}\frac{1}{\Gamma \left( n\alpha +1\right) }\left[ 
\frac{k}{1-l}\left( \psi \left( T\right) -\psi \left( s\right) \right)^{\alpha }\right] ^{n}\right]  \notag \\
&=&\frac{\varepsilon \left( \psi \left( T\right) -\psi \left( a\right)\right) ^{\alpha }}{\Gamma \left( \alpha +1\right) }\mathbb{E}_{\alpha }\left( \frac{k}{1-l}\left( \psi \left( T\right) -\psi \left( s\right) \right) ^{\alpha
}\right).
\end{eqnarray}

Then, for $c_{f}:=\dfrac{\left( \psi \left( T\right) -\psi \left(
a\right)\right) ^{\alpha }}{\Gamma \left( \alpha +1\right) }\mathbb{E}_{\alpha
}\left( \dfrac{k}{1-l}\left( \psi \left( T\right) -\psi \left( s\right) \right)
^{\alpha}\right) $ with $t\in J=\left[ a,T\right] ,$ $T>a$, we conclude from
Eq.(\ref{casa9}) that Eq.(\ref{CPT}) is Ulam-Hyers stable. On the other hand,
choosing $\Phi \left( \varepsilon \right) =c\varepsilon ,$ $\Phi \left(
0\right) =0,$ we obtain that Eq.(\ref{CPT}) is generalized Ulam-Hyers stable.
\end{proof}

\begin{theorem}\label{teo6} Assume the validity of {\rm (H1)}, {\rm (H2)} and {\rm Eq.(\ref{casa2})}. Assume also the validity of hypothesis {\rm (H3)}, i.e., that function $\varphi \in C\left( J,\mathbb{R}	_{+}\right) $ is increasing and there exists $\lambda _{\varphi }>0$ such that, for each $t\in J,$ we have
\begin{equation*}
I_{a+}^{\alpha ;\psi }\varphi \left( t\right) \leq \lambda _{\varphi
}\varphi \left( t\right) .
\end{equation*}
Then {\rm Eq.(\ref{CPT})} is Ulam-Hyers-Rassias stable with respect to $\varphi$.
\end{theorem}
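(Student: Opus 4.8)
The plan is to mirror, step by step, the proof of Theorem~\ref{teo5}, replacing the constant right-hand side $\varepsilon$ by $\varepsilon\varphi(t)$ and using hypothesis (H3) to control the fractional integral of $\varphi$. First I would fix $\varepsilon>0$ and a solution $z\in C_{1-\gamma,\psi}(J,\mathbb{R})$ of the inequality Eq.(\ref{CPT2}), and let $y\in C_{1-\gamma,\psi}(J,\mathbb{R})$ be the unique solution of Eq.(\ref{CPT}) provided by Theorem~\ref{teo4}, normalized so that $I_{a+}^{(1-\beta)(1-\alpha);\psi}y(a)=I_{a+}^{(1-\beta)(1-\alpha);\psi}z(a)$. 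By Lemma~\ref{lema1}, $y(t)=\frac{(\psi(t)-\psi(a))^{\gamma-1}}{\Gamma(\gamma)}y_a+I_{a+}^{\alpha;\psi}g_y(t)$ with $g_y(t)=f(t,y(t),g_y(t))$, and similarly $z$ produces a function $g_z$ with $g_z(t)=f(t,z(t),g_z(t))$.

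Next I would apply $I_{a+}^{\alpha;\psi}$ to both sides of Eq.(\ref{CPT2}); since this operator has a nonnegative kernel it is monotone, so Theorem~\ref{teo1} combined with hypothesis (H3) gives $\left|z(t)-\frac{(\psi(t)-\psi(a))^{\gamma-1}}{\Gamma(\gamma)}z_a-I_{a+}^{\alpha;\psi}g_z(t)\right|\le \varepsilon\,I_{a+}^{\alpha;\psi}\varphi(t)\le\varepsilon\lambda_\varphi\varphi(t)$. Then, exactly as in the passage from Eq.(\ref{casa7}) to Eq.(\ref{casa8}), the triangle inequality together with (H2) applied to $g_z-g_y$ yields $|g_z(t)-g_y(t)|\le\frac{k}{1-l}|z(t)-y(t)|$ (note $1-l>0$, since Eq.(\ref{casa2}) forces $l<1$), and therefore $u(t):=|z(t)-y(t)|$ satisfies the integral inequality $u(t)\le\varepsilon\lambda_\varphi\varphi(t)+\frac{k}{(1-l)\Gamma(\alpha)}\int_a^t\psi'(s)\left(\psi(t)-\psi(s)\right)^{\alpha-1}u(s)\,ds$.

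The final step is to invoke Gronwall's inequality, Theorem~\ref{teo3}, with $v(t)=\varepsilon\lambda_\varphi\varphi(t)$ (nonnegative, and nondecreasing because $\varphi$ is increasing) and $g\equiv\frac{k}{(1-l)\Gamma(\alpha)}$. Pulling $\varphi(s)\le\varphi(t)$ out of each term of the resulting series, exactly as in Eq.(\ref{casa9}) of the proof of Theorem~\ref{teo5}, the series sums to a Mittag-Leffler value and one obtains $u(t)\le\varepsilon\lambda_\varphi\varphi(t)\,\mathbb{E}_\alpha\!\left(\frac{k}{1-l}\left(\psi(t)-\psi(a)\right)^\alpha\right)$. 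Bounding $\psi(t)-\psi(a)\le\psi(T)-\psi(a)$ on $J=[a,T]$ and setting $c_{f,\varphi}:=\lambda_\varphi\,\mathbb{E}_\alpha\!\left(\frac{k}{1-l}\left(\psi(T)-\psi(a)\right)^\alpha\right)$, this reads $|z(t)-y(t)|\le c_{f,\varphi}\,\varepsilon\varphi(t)$ for all $t\in J$, which is Ulam--Hyers--Rassias stability of Eq.(\ref{CPT}) with respect to $\varphi$.

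The genuinely delicate points, as opposed to the routine integral estimates, are two. First, $\varepsilon\varphi(t)$ must be handled in the correct order: one applies $I_{a+}^{\alpha;\psi}$ first and only afterwards uses (H3) to replace $I_{a+}^{\alpha;\psi}\varphi$ by $\lambda_\varphi\varphi$, which is precisely what makes the final bound proportional to $\varphi(t)$ rather than to some fractional integral of it. Second, one must make sure the Gronwall iteration actually converges and produces an $\varepsilon$-independent constant: here the increasing property of $\varphi$ in (H3) is essential, since it lets the series $\sum_{n\ge1}\frac{1}{\Gamma(n\alpha+1)}\big(\frac{k}{1-l}(\psi(T)-\psi(a))^\alpha\big)^n$ be recognized as (part of) an entire Mittag-Leffler function, so that $c_{f,\varphi}$ is finite with no extra smallness assumption on $k$ or $\lambda_\varphi$.
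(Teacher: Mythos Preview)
Your argument is correct, and up to the integral inequality
\[
|z(t)-y(t)|\le \varepsilon\lambda_\varphi\varphi(t)+\frac{k}{(1-l)\Gamma(\alpha)}\int_a^t\psi'(s)\bigl(\psi(t)-\psi(s)\bigr)^{\alpha-1}|z(s)-y(s)|\,ds
\]
it coincides with the paper's own proof. The divergence is in how this inequality is resolved. You iterate it via Gronwall (Theorem~\ref{teo3}), using that $\varphi$ is increasing to pull $\varphi(s)\le\varphi(t)$ outside and sum the resulting series to a Mittag--Leffler function, obtaining $c_{f,\varphi}=\lambda_\varphi\,\mathbb{E}_\alpha\!\bigl(\tfrac{k}{1-l}(\psi(T)-\psi(a))^\alpha\bigr)$. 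The paper instead bounds $|z(s)-y(s)|$ by $\Vert z-y\Vert_\infty$ inside the integral, evaluates the integral explicitly, and then uses the smallness condition Eq.(\ref{casa2}) directly to solve for $\Vert z-y\Vert_\infty$, arriving at the constant $c_f=\lambda_\varphi\bigl[1-\tfrac{k(\psi(T)-\psi(a))^\alpha}{(1-l)\Gamma(\alpha+1)}\bigr]^{-1}$.

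Each route has its advantage. The paper's sup-norm argument avoids Gronwall entirely and gives a tidy algebraic constant, but it genuinely needs the full strength of Eq.(\ref{casa2}) to make the bracket positive. Your Gronwall approach uses Eq.(\ref{casa2}) only to guarantee $l<1$ (and existence via Theorem~\ref{teo4}); the stability constant is finite for any $k>0$ because the Mittag--Leffler function is entire, and the bound is pointwise in $\varphi(t)$ by construction. Your version is thus closer in spirit to the proof of Theorem~\ref{teo5} and arguably cleaner, at the cost of a less explicit constant.
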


\begin{proof} Let $z\in C_{1-\gamma ,\psi }\left( J,\mathbb{R} \right) $ be a solution of the inequality
\begin{equation}\label{casa10}
\left\vert ^{H}\mathbb{D}_{a+}^{\alpha ,\beta ;\psi }z\left( t\right) -f\left( t,z\left( t\right), ^{H}\mathbb{D}_{a+}^{\alpha ,\beta ;\psi }z\left( t\right) \right) \right\vert \leq \varepsilon \varphi \left( t\right) ,\text{ }t\in J,\text{ }\varepsilon >0.
\end{equation}

On the other hand, let us denote by $y\in C_{1-\gamma ,\psi }\left( J,\mathbb{R}\right) $ the unique solution of the Cauchy problem
\begin{equation*}
^{H}\mathbb{D}_{a+}^{\alpha ,\beta ;\psi }y\left( t\right) =f\left( t,y\left( t\right), ^{H}\mathbb{D}_{a+}^{\alpha ,\beta ;\psi }y\left( t\right) \right) ,\text{ }t\in J\,,\text{ }0<\alpha \leq 1,\text{ }0\leq \beta \leq 1,
\end{equation*}
\begin{equation*}
I_{a+}^{\alpha ;\psi }y\left( a\right) =I_{a+}^{\alpha ;\psi }z\left( a\right) .
\end{equation*}

Using Lemma \ref{lema1}, we have
\begin{equation*}
y\left( t\right) =\frac{\left( \psi \left( t\right) -\psi \left( a\right) \right) ^{\gamma -1}}{\Gamma \left( \gamma \right) }y_{a}+\frac{1}{\Gamma \left( \alpha \right) }\int_{a}^{t}\psi ^{\prime }\left( s\right) \left( \psi \left( t\right) -\psi \left( s\right) \right) ^{\alpha -1}g_{y}\left( s\right) ds
\end{equation*}
where $g_{z}\in C_{1-\gamma ,\psi }\left( J,\mathbb{R}\right) $ satisfies the functional equation
\begin{equation*}
g_{y}\left( t\right) =f\left( t,\frac{\left( \psi \left( t\right) -\psi \left( a\right) \right) ^{\gamma -1}}{\Gamma \left( \gamma \right) }y_{a}+I_{a+}^{\alpha ;\psi }g_{y}\left( t\right) ,g_{y}\left( t\right) \right) .
\end{equation*}

Applying operator $I_{a+}^{\alpha ;\psi }\left( \cdot \right) $ on both sides of Eq.(\ref{casa10}) and using Theorem \ref{teo1}, we obtain 
\begin{eqnarray}\label{casa11}
&&\left\vert z\left( t\right) -\frac{\left( \psi \left( t\right) -\psi \left( a\right) \right) ^{\gamma -1}}{\Gamma \left( \gamma \right) }z_{a}-I_{a+}^{\alpha ;\psi }f\left( t,z\left( t\right), ^{H}\mathbb{D}_{a+}^{\alpha
,\beta ;\psi }z\left( t\right) \right) \right\vert  \notag \\ &\leq &\varepsilon \frac{1}{\Gamma \left( \alpha \right) }\int_{a}^{t}\psi ^{\prime }\left( s\right) \left( \psi \left( t\right) -\psi \left( a\right) \right) ^{\gamma -1}\varphi \left( s\right) ds\leq \varepsilon \lambda _{\varphi }\varphi \left( t\right),
\end{eqnarray}
where $g_{z}\in C_{1-\gamma ,\psi }\left( J,\mathbb{R}\right) $ satisfies the functional equation
\begin{equation*}
g_{z}\left( t\right) =f\left( t,\frac{\left( \psi \left( t\right) -\psi\left( a\right) \right) ^{\gamma -1}}{\Gamma \left( \gamma \right) }z_{a}+I_{a+}^{\alpha ;\psi }g_{z}\left( t\right) ,g_{z}\left( t\right)\right) .
\end{equation*}

Then, performing the same steps of Eq.(\ref{casa7}), we have, for each $t\in J$,
\begin{eqnarray}\label{casa12}
&&\left\vert z\left( t\right) -y\left( t\right) \right\vert \notag \\
&=&\left\vert z\left( t\right) -\frac{\left( \psi \left( t\right) -\psi \left( a\right) \right) ^{\gamma -1}}{\Gamma \left( \gamma \right) }z_{a}-\frac{1}{\Gamma \left( \alpha \right) }\int_{a}^{t}\psi ^{\prime }\left( s\right) \left( \psi \left( t\right) -\psi \left( s\right) \right) ^{\alpha -1}g_{y}\left( s\right) ds\right\vert  \notag \\
&\leq &\left\vert z\left( t\right) -\frac{\left( \psi \left( t\right) -\psi \left( a\right) \right) ^{\gamma -1}}{\Gamma \left( \gamma \right) }z_{a}-\frac{1}{\Gamma \left( \alpha \right) }\int_{a}^{t}\psi ^{\prime }\left(
s\right) \left( \psi \left( t\right) -\psi \left( s\right) \right) ^{\alpha -1}g_{z}\left( s\right) ds\right\vert  \notag \\
&&+\frac{1}{\Gamma \left( \alpha \right) }\int_{a}^{t}\psi ^{\prime }\left( s\right) \left( \psi \left( t\right) -\psi \left( s\right) \right) ^{\alpha -1}\left\vert g_{z}\left( s\right) -g_{y}\left( s\right) \right\vert ds
\end{eqnarray}
where
\begin{equation*}
g_{y}\left( t\right) =f\left( t,y\left( t\right) ,g_{y}\left( t\right)\right)
\end{equation*}
and
\begin{equation*}
g_{z}\left( t\right) =f\left( t,z\left( t\right) ,g_{z}\left( t\right)\right) .
\end{equation*}

Then, using Eq.(\ref{casa8}), Eq.(\ref{casa11}) and Eq.(\ref{casa12}), we have
\begin{eqnarray}\label{casa13}
&&\left\vert z\left( t\right) -y\left( t\right) \right\vert \notag \\
&\leq &\varepsilon \lambda _{\varphi }\varphi \left( t\right) +\frac{k}{\left( 1-l\right) \Gamma \left( \alpha \right) }\int_{a}^{t}\psi ^{\prime }\left( s\right)
\left( \psi \left( t\right) -\psi \left( s\right) \right) ^{\alpha -1}\left\vert z\left( s\right) -y\left( s\right) \right\vert ds  \notag \\
&\leq &\varepsilon \lambda _{\varphi }\varphi \left( t\right) +\frac{k\left\Vert z-y\right\Vert _{\infty }}{\left( 1-l\right) \Gamma \left( \alpha \right) }\int_{a}^{t}\psi ^{\prime }\left( s\right) \left( \psi \left( t\right) -\psi \left( s\right) \right) ^{\alpha -1}ds  \notag \\
&\leq &\varepsilon \lambda _{\varphi }\varphi \left( t\right) +\frac{ k\left\Vert z-y\right\Vert _{\infty }}{\left( 1-l\right) \Gamma \left( \alpha +1\right) }\left( \psi \left( T\right) -\psi \left( s\right) \right) ^{\alpha }.
\end{eqnarray}

Thus, we have
\begin{equation}\label{casa14}
\left\Vert z-y\right\Vert _{\infty }\left[ 1-\frac{k\left( \psi \left(t\right) -\psi \left( a\right) \right) ^{\alpha }}{\left( 1-l\right) \Gamma\left( \alpha +1\right) }\right] \leq \varepsilon \lambda _{\varphi }\varphi\left( t\right) .
\end{equation}

Using Eq.(\ref{casa2}), we can rewrite Eq.(\ref{casa14}) in the following form 
\begin{equation*}
\left\Vert z-y\right\Vert _{\infty }\leq \left[ 1-\frac{k\left( \psi \left( t\right) -\psi \left( a\right) \right) ^{\alpha }}{\left( 1-l\right) \Gamma \left( \alpha +1\right) }\right] ^{-1}\varepsilon \lambda _{\varphi }\varphi
\left( t\right) .
\end{equation*}

Then, for $c_{f}:=\left[ 1-\dfrac{k\left( \psi \left( t\right) -\psi \left( a\right) \right) ^{\alpha }}{\left( 1-l\right) \Gamma \left( \alpha +1\right) }\right] ^{-1}\lambda _{\varphi }$ with $t\in J=\left[ a,T\right]$, $T>a,$ we conclude from Eq.(\ref{casa13}) that Eq.(1) is Ulam-Hyers-Rassias stable.  
\end{proof}

Proposing a fractional operator of differentiation or integration is important and motivating to researchers working in the field of fractional calculus. However, proposing a fractional operator that unifies a large number of definitions is not a simple task, let alone an easy one. The $\psi$-Hilfer operator defined in Eq.(\ref{A}) contains, as special cases,  several different classes (definitions) of fractional derivatives. Thus, the nonlinear Cauchy problem we have just proposed does also contain, as special cases, corresponding Cauchy problems for those classes of fractional derivatives. Moreover, the results we have just proved about the uniqueness of its solution and its stabilities also apply to the special cases.

These classes of fractional derivatives are obtained by means of an adequate choice of $\psi(\cdot)$ and by considering the limits $\beta\rightarrow 1$ or $\beta \rightarrow 0$.

The choice of parameter $a$ also determines the particular cases of the operators. For instance, in order to recover our results for the Hadamard fractional derivative, we choose $\psi(t)=\ln t$, take the limit $\beta
\rightarrow 0 $  and put $a=1$, because $\ln t$ is not defined for $a=0$. 

We now present some of these  particular cases of Ulam-Hyers and Ulam-Hyers-Rassias stabilities.  In the next section, we present some examples and discuss some particular derived from them. 

\begin{itemize}

\item Choosing $\psi(t)=\ln t$, $a=1$ and applying the limit $\beta \rightarrow 0$ on both sides of Eq.(\ref{CPT}) and applying Theorem \ref{teo5} and Theorem \ref{teo6}, we obtain the stability of Ulam-Hyers and Ulam-Hyers-Rassias for the Cauchy problem with Hadamard fractional derivative \cite{PRIN};

\item For $\psi(t)=t$, taking the limit $\beta \rightarrow 1$ on both sides of Eq.(\ref{CPT}) and applying Theorem \ref{teo5} and Theorem \ref{teo6}, we obtain the stability of Ulam-Hyers and	Ulam-Hyers-Rassias for the Cauchy problem with the Caputo fractional derivative \cite{PRIN1};

\item For $\psi(t)=t$, taking the limit $\beta \rightarrow 0$ on both sides of Eq.(\ref{CPT}) and applying Theorem \ref{teo5} and Theorem \ref{teo6}, we obtain the stability of Ulam-Hyers and	Ulam-Hyers-Rassias for the Cauchy problem with the Riemann-Liouville fractional derivative;

\item Choosing $\psi(t)=t ^{\rho}$, taking the limit $\beta \rightarrow 0 $ on both sides of Eq.(\ref{CPT}) and applying Theorem \ref{teo5} and	Theorem \ref{teo6}, we obtain the stability of Ulam-Hyers and
Ulam-Hyers-Rassias for the Cauchy problem with the Katugampola fractional derivative;

\item For $\psi(t)=\ln t$, $a=1$ and applying the limit $\beta \rightarrow 0$ on both sides of Eq.(\ref{CPT}) and applying Theorem \ref{teo5} and Theorem \ref{teo6}, we obtain the stability of Ulam-Hyers and
Ulam-Hyers-Rassias for the Cauchy problem with the Caputo-Hadamard fractional derivative.

\end{itemize}

We thus see that the Cauchy problem proposed for the $\psi$-Hilfer fractional derivative is in fact general, and with this it is possible to deduce the existence and uniqueness, as well as the stability of
Ulam-Hyers and Ulam-Hyers-Rassias, for the non-linear Cauchy problem Eq.(\ref{CPT}). 

%%%%%%%%%%%%%%%%%%%%%%%%%%%%%%%%%%%%%%%%%%%%%%%%%%%%%%%%%%%%%%%%%%%%%%%%%%%%%%%%%%%%%%%%%%%%%%%%%%%%%%%%%%%%%%%%%%%%%%%%%%%%%%%%%%%%%%%%%%%%%%%%%%%%%%%%%%%%%%%%%%%%%%%%%%%%%%%%%%%%%%%%%%%%%%%%%%%%%%%%%%%%%%%%%%%%%%%%%%%%%%%%%%%%%%%%%%%%

\section{Examples}

In this section, we consider some particular Cauchy problems in order to apply our results to study their generalized Hyers-Ulam and Hyers-Ulam-Rassias stabilities.

Consider the following nonlinear Cauchy problem: 
\begin{equation}\label{casa15}
\left\{ 
\begin{array}{rcl}
^{H}\mathbb{D}_{a+}^{\alpha ,\beta ;\psi }y\left( t\right) & = & \dfrac{\lambda }{20} \mathbb{E}_{\alpha }\left( \lambda ^{\beta }\left( \psi \left( t\right) -\psi \left( a\right) \right) ^{\alpha }\right) y\left( t\right) +\dfrac{\lambda }{10}\text{ }^{H}\mathbb{D}_{a+}^{\alpha ,\beta ;\psi }y\left( t\right) ,\text{ }t\in \left[ a,T\right] \\ I_{a+}^{\alpha ;\psi }y\left( a\right) & = & 1
\end{array}
\right.
\end{equation}
where $\mathbb{E}_{\alpha}(\cdot)$ is the Mittag-Leffler, $^{H}\mathbb{D}^{\alpha,\beta;\psi}(\cdot)$ is the $\psi$-Hilfer fractional derivative and $I^{\alpha;\psi}_{a+}(\cdot)$ is $\psi$-Riemann-Liouville fractional integral.

The following examples are particular cases of the Cauchy problem given by Eq.(\ref{casa15}).

\begin{example}

Taking $\left( t\right) =t,$ $a=0,$ $T=1,$ $\alpha =1/2 $ and $\beta\rightarrow 0$, we get a particular case of the Cauchy problem {\rm Eq.(\ref{casa15})} involving the Riemann-Liouville fractional derivative, given by

\begin{equation}\label{casa16}
\left\{ 
\begin{array}{rcl}
^{RL}\mathcal{D}_{0+}^{1/2,0;t}y\left( t\right) & = & \dfrac{\lambda }{20}\mathbb{E}_{1/2}\left( t^{1/2}\right) y\left( t\right) +\dfrac{\lambda }{10}\text{ }^{RL}\mathcal{D}_{0+}^{1/2,0;t}y\left( t\right) ,\text{ }t\in \left[ 0,1\right]  \\  I_{0+}^{1/2;t}y\left( 0\right) & = & 1
\end{array}
\right.
\end{equation}
with $0<\lambda <\dfrac{13}{5}$ and
\begin{equation*}
f\left( t,u,v\right) =\frac{\lambda }{20}\mathbb{E}_{1/2}\left( t^{1/2}\right) u+\frac{\lambda }{10}v,\text{ }t\in \left[ 0,1\right] ,\text{ }u,v\in \mathbb{R}.
\end{equation*}

For all $u,v,\overline{u},\overline{v}\in \mathbb{R}$ and $t\in \left[ 0,1\right]$, we have
\begin{eqnarray*}
\left\vert f\left( t,u,v\right) -f\left( t,\overline{u},\overline{v}\right) \right\vert &=&\left\vert \frac{\lambda }{20}\mathbb{E}_{1/2}\left( t^{1/2}\right) u+\frac{\lambda }{10}v-\frac{\lambda }{20}\mathbb{E}_{1/2}\left( t^{1/2}\right)  
\overline{u}-\frac{\lambda }{10}\overline{v}\right\vert  \notag \\
&\leq &\frac{\lambda }{20}\mathbb{E}_{1/2}\left( t^{1/2}\right) \left\vert u-\overline{u}\right\vert +\frac{\lambda }{10}\left\vert v-\overline{v}\right\vert.
\end{eqnarray*}

Thus, condition {\rm (H2)} is satisfied with $k=\dfrac{\lambda }{20} \mathbb{E}_{1/2}\left( t^{1/2}\right) $ and $l=\dfrac{\lambda }{10}$. On the other hand, using {\rm Eq.(\ref{A7})}, the condition
\begin{equation*}
\frac{k\left( \psi \left( t\right) -\psi \left( a\right) \right) ^{\alpha }}{\Gamma \left( \alpha +1\right) \left( 1-l\right) }=\frac{\frac{\lambda }{20}\mathbb{E}_{1/2}\left( t^{1/2}\right) t^{1/2}}{\Gamma \left( 3/2\right) \left( 1-
\frac{\lambda }{10}\right) }=\frac{\lambda \mathbb{E}_{1/2}\left( t^{1/2}\right) t^{1/2}}{\sqrt{\pi }\left( 10-\lambda \right) }\leq \frac{\lambda \mathbb{E}_{1/2}\left( 1\right) }{\sqrt{\pi }\left( 10-\lambda \right) }<1,
\end{equation*}
for $0<\lambda <\dfrac{13}{5}$, is satisfied.

So, by {\rm Theorem \ref{teo4}}, the problem has a unique solution.	Consequently, by {\rm Theorem \ref{teo5}}, {\rm Eq.(\ref{casa16})} has Ulam-Hyers stability.

\end{example}

\begin{example} For $\psi \left( t\right) =t,$ $a=0,$ $T=1,$ $\alpha =1/2$ and 	$\beta \rightarrow 1$, we obtain a particular case of the Cauchy problem {\rm Eq.(\ref{casa15})} using the Caputo fractional derivative: 
\begin{equation}\label{casa17}
\left\{ 
\begin{array}{rcl}
^{C}\mathscr{D}_{0+}^{1/2,1;t}y\left( t\right) & = & \dfrac{\lambda }{20}\mathbb{E}_{1/2}\left( \lambda t^{1/2}\right) y\left( t\right) +\dfrac{\lambda }{10} \text{ }^{C}\mathscr{D}_{0+}^{1/2,1;t}y\left( t\right) ,\text{ }t\in \left[ 0,1\right] \\ I_{0+}^{1;t}y\left( 0\right) & = & 1
\end{array}
\right.
\end{equation}
with $0<\lambda <\dfrac{13}{10}$ and
\begin{equation*}
f\left( t,u,v\right) =\frac{\lambda }{20}\mathbb{E}_{1/2}\left( \lambda t^{1/2}\right) u+\frac{\lambda }{10}v,\text{ }t\in \left[ 0,1\right] ,\text{ }u,v\in \mathbb{R}.
\end{equation*}

For all $u,v,\overline{u},\overline{v}\in \mathbb{R}$ and $t\in \left[ 0,1\right]$, we have
\begin{eqnarray*}
\left\vert f\left( t,u,v\right) -f\left( t,\overline{u},\overline{v}\right) \right\vert &=&\left\vert \frac{\lambda }{20}\mathbb{E}_{1/2}\left( \lambda t^{1/2}\right) u+\frac{\lambda }{10}v-\frac{\lambda }{20}\mathbb{E}_{1/2}\left(\lambda t^{1/2}\right) \overline{u}-\frac{\lambda }{10}\overline{v}\right\vert  \notag \\
&\leq &\frac{\lambda }{20}\mathbb{E}_{1/2}\left( \lambda t^{1/2}\right) \left\vert u-\overline{u}\right\vert +\frac{\lambda }{10}\left\vert v-\overline{v}\right\vert .
\end{eqnarray*}

Thus, condition {\rm (H2)} is satisfied with $k=\dfrac{\lambda}{20}\mathbb{E}_{1/2}\left( \lambda t^{1/2}\right) $ and $l=\dfrac{\lambda }{10}$. On the other hand, the condition
\begin{equation*}
\frac{k\left( \psi \left( t\right) -\psi \left( a\right) \right) ^{\alpha }}{ \Gamma \left( \alpha +1\right) \left( 1-l\right) }=\frac{\lambda \mathbb{E}_{1/2}\left( \lambda t^{1/2}\right) t^{1/2}}{\sqrt{\pi }\left( 10-\lambda
\right) }\leq \frac{\lambda \mathbb{E}_{1/2}\left( \lambda \right) }{\sqrt{\pi } \left( 10-\lambda \right) }<1,
\end{equation*}
for $0<\lambda <\dfrac{13}{10}$, is satisfied.

So, by {\rm Theorem \ref{teo4}}, the problem has a unique solution.	Consequently, by {\rm Theorem \ref{teo5}}, {\rm Eq.(\ref{casa17})} has Ulam-Hyers stability.

\end{example}

\begin{example} For $\psi \left( t\right) =\ln t,$ $a=1,$ $T=e,$ $\alpha =1/2$
	and $\beta \rightarrow 0,$ we have a particular case of the Cauchy
	problem {\rm Eq.(\ref{casa15})} involving the Hadamard fractional
	derivative:
\begin{equation}\label{casa18}
\left\{ 
\begin{array}{rcl}
^{HD}\mathscr{D}_{1+}^{1/2,0;\ln t}y\left( t\right) & = & \dfrac{\lambda }{20}\mathbb{E}_{1/2}\left( \ln t^{1/2}\right) y\left( t\right) +\dfrac{\lambda }{10}\text{ }^{HD}\mathscr{D}_{1+}^{1/2,0;\ln t}y\left( t\right) ,\text{ }t\in \left[ 1,e\right] \\ I_{1+}^{1/2;\ln t}y\left( 1\right) & = & 1
\end{array}
\right.
\end{equation}
with $0<\lambda <\dfrac{13}{5}$ and
\begin{equation*}
f\left( t,u,v\right) =\frac{\lambda }{20}\mathbb{E}_{1/2}\left( \ln t^{1/2}\right) u+\frac{\lambda }{10}v,\text{ }t\in \left[ 1,e\right] ,\text{ }u,v\in \mathbb{R}.
\end{equation*}

For all $u,v,\overline{u},\overline{v}\in \mathbb{R}$ and $t\in \left[ 1,e\right]$, we have
\begin{eqnarray*}
\left\vert f\left( t,u,v\right) -f\left( t,\overline{u},\overline{v}\right) \right\vert &=&\left\vert \frac{\lambda }{20}\mathbb{E}_{1/2}\left( \ln t^{1/2}\right) u+\frac{\lambda }{10}v-\frac{\lambda }{20}\mathbb{E}_{1/2}\left( \ln
t^{1/2}\right) \overline{u}-\frac{\lambda }{10}\overline{v}\right\vert  \notag \\
&\leq &\frac{\lambda }{20}\mathbb{E}_{1/2}\left( \ln t^{1/2}\right) \left\vert u-\overline{u}\right\vert +\frac{\lambda }{10}\left\vert v-\overline{v}\right\vert.
\end{eqnarray*}

Thus, condition {\rm (H2)} is satisfied with $k=\dfrac{\lambda }{20}\mathbb{E}_{1/2}\left( \ln t^{1/2}\right) $ and $l=\dfrac{\lambda }{10}$. On the other hand, using {\rm Eq.(\ref{A7})}, the condition
\begin{equation*}
\frac{k\left( \psi \left( t\right) -\psi \left( a\right) \right) ^{\alpha }}{\Gamma \left( \alpha +1\right) \left( 1-l\right) }=\frac{\frac{\lambda }{20}\mathbb{E}_{1/2}\left( \ln t^{1/2}\right) \left( \ln t-\ln a\right) }{\Gamma \left(3/2\right) \left( 1-\frac{\lambda }{10}\right) }\leq \frac{\lambda \mathbb{E}_{1/2}\left( 1\right) }{\sqrt{\pi }\left( 10-\lambda \right) }<1,
\end{equation*}
for $0<\lambda <\dfrac{13}{5}$, is satisfied.

So, by {\rm Theorem \ref{teo4}}, the problem has a unique solution.	Consequently, by {\rm Theorem \ref{teo5}} the {\rm Eq.(\ref{casa18})}, has Ulam-Hyers stability.

\end{example}

\begin{example} For $\psi \left( t\right) =t,$ $a=0,$ $T=1,$ $\alpha =1/2$ and $\beta =1/2,$ we obtain a particular case of the Cauchy problem {\rm	Eq.(\ref{casa15})} involving the $\psi$-Hilfer fractional derivative: 
\begin{equation}\label{casa19}
\left\{ 
\begin{array}{rcl}
^{H}\mathbb{D}_{0+}^{1/2,1/2;t}y\left( t\right) & = & \dfrac{\lambda }{20}\mathbb{E}_{1/2}\left( \lambda ^{1/2}t^{1/2}\right) y\left( t\right) +\dfrac{\lambda }{10}\text{ }^{H}\mathbb{D}_{0+}^{1/2,1/2;t}y\left( t\right) ,\text{ }t\in \left[ 0,1\right] \\ I_{0+}^{3/4;t}y\left( 1\right) & = & 1
\end{array}
\right.
\end{equation}
with $0<\lambda <\dfrac{13}{10}$ and
\begin{equation*}
f\left( t,u,v\right) =\frac{\lambda }{20}\mathbb{E}_{1/2}\left( \lambda^{1/2}t^{1/2}\right) u+\frac{\lambda }{10}v,\text{ }t\in \left[ 0,1\right],\text{ }u,v\in \mathbb{R}.
\end{equation*}

For all $u,v,\overline{u},\overline{v}\in \mathbb{R}$ and $t\in \left[ 0,1\right]$, we get
\begin{eqnarray*}
\left\vert f\left( t,u,v\right) -f\left( t,\overline{u},\overline{v}\right) \right\vert &=&\left\vert \frac{\lambda }{20}\mathbb{E}_{1/2}\left( \lambda ^{1/2}t^{1/2}\right) u+\frac{\lambda }{10}v-\frac{\lambda }{20}\mathbb{E}_{1/2}\left(\lambda ^{1/2}t^{1/2}\right) \overline{u}-\frac{\lambda }{10}\overline{v}\right\vert  \notag \\ &\leq &\frac{\lambda }{20}\mathbb{E}_{1/2}\left( \lambda ^{1/2}t^{1/2}\right)\left\vert u-\overline{u}\right\vert +\frac{\lambda }{10}\left\vert v-\overline{v}\right\vert .
\end{eqnarray*}

Thus, condition {\rm (H2)} is satisfied with $k=\dfrac{\lambda }{20}\mathbb{E}_{1/2}\left( \lambda ^{1/2}t^{1/2}\right) $ and $l=\dfrac{\lambda }{10}$. On the other hand, the condition
\begin{equation*}
\frac{k\left( \psi \left( t\right) -\psi \left( a\right) \right) ^{\alpha }}{\Gamma \left( \alpha +1\right) \left( 1-l\right) }=\frac{\frac{\lambda }{20}\mathbb{E}_{1/2}\left( \lambda ^{1/2}t^{1/2}\right) t^{1/2}}{\Gamma \left( 3/2\right) \left( 1-\frac{\lambda }{10}\right) }\leq \frac{\lambda \mathbb{E}_{1/2}\left(\lambda ^{1/2}\right) }{\sqrt{\pi }\left( 10-\lambda \right) }<1,
\end{equation*}
for $0<\lambda <\dfrac{13}{10}$, is satisfied.

So, by {\rm Theorem \ref{teo4}}, the problem has a unique solution. Consequently, by {\rm Theorem \ref{teo5}}, the {\rm Eq.(\ref{casa19})} has Ulam-Hyers stability.

\end{example}

\begin{remark} We presented four particular cases for the study of Ulam-Hyers stability. We have seen that for the Cauchy problems using the Riemann-Liouville and Hadamard fractional derivatives, the stability range $0<\lambda <\dfrac{13}{5}$ is the same. On the other hand, for the Cauchy problem involving the Caputo fractional derivative, the interval is $0<\lambda<\dfrac {13} {10}$.  Thus, we can say that the stability interval for parameter $\lambda$ is larger when we use Riemann-Liouville and Hadamard derivatives than when we considered the Caputo derivative. Therefore, we can conclude that, for the Cauchy problem {\rm Eq.\ref{casa15})}, its particular cases have different	stability intervals for each particular fractional derivative type. 

\end{remark}

The following examples are directed to the stability of Ulam-Hyers-Rassias.

Consider the following nonlinear Cauchy problem: 
\begin{equation}\label{casa20}
\left\{ 
\begin{array}{rcl}
^{H}\mathbb{D}_{a+}^{\alpha ,\beta ;\psi }y\left( t\right) & = & \dfrac{\lambda ^{\beta }}{20}\left( \psi \left( t\right) -\psi \left( a\right) \right) ^{\alpha }\cos t\text{ }y\left( t\right) +\dfrac{\lambda ^{\beta }}{20}
\text{ }^{H}\mathbb{D}_{a+}^{\alpha ,\beta ;\psi }y\left( t\right) ,\text{ }t\in \left[ a,T\right] \\ 
I_{a+}^{\alpha ;\psi }y\left( a\right) & = & 1
\end{array}
\right.
\end{equation}

The following examples are particular cases of the Cauchy problem given by
Eq.(\ref{casa20}).

\begin{example} Taking $\psi \left( t\right) =\ln t,$ $a=1,$ $T=e,$ $\alpha =1/2$ and $\beta \rightarrow 0,$ we get a particular case of the Cauchy	problem {\rm Eq.(\ref{casa20})} using the Hadamard fractional derivative: 

\begin{equation}\label{casa21}
\left\{ 
\begin{array}{rcl}
^{HD}\mathscr{D}_{1+}^{1/2,0;\ln t}y\left( t\right)  & = & \dfrac{1}{20}\ln t^{1/2}\cos t\text{ }y\left( t\right) +\dfrac{1}{20}\text{ }^{HD}\mathscr{D}_{1+}^{1/2,0;\ln t}y\left(t\right) ,\text{ }t\in \left[ 1,e\right]  \\I_{1+}^{1/2;\ln t}y\left( 1\right)  & = & 1
\end{array}
\right. 
\end{equation}
and
\begin{equation*}
f\left( t,u,v\right) =\frac{\ln t^{1/2}}{20}\cos t\text{ }u+\frac{v}{20},\text{ }u,v\in \mathbb{R},\text{ }t\in \left[ 1,e\right].
\end{equation*}

Note that $f$ is a set of continuous functions. Then, for all $u,v,\overline{u},\overline{v}\in \mathbb{R}$ and $t\in \left[ 1,e\right]$, we have
\begin{eqnarray*}
\left\vert f\left( t,u,v\right) -f\left( t,\overline{u},\overline{v}\right)\right\vert &=&\left\vert \frac{\ln t^{1/2}}{20}\cos t\text{ }u+\frac{v}{20}-\frac{\ln t^{1/2}}{20}\cos t\text{ }\overline{u}-\frac{\overline{v}}{20}
\right\vert  \notag \\&\leq &\frac{1}{20}\left( \left\vert u-\overline{u}\right\vert +\left\vert v-\overline{v}\right\vert \right) .
\end{eqnarray*}

Thus, condition {\rm (H2)} is satisfied with $k=l=1/20$. So, for $\varphi \left( t\right) =\ln t^{1/2},$ we get
\begin{eqnarray*}
I_{1+}^{1/2;\ln t}\varphi \left( t\right) &=&\frac{1}{\Gamma \left( 1/2\right) }\int_{1}^{t}\left( \ln \frac{t}{s}\right) ^{-1/2}\left( \ln s\right) ^{1/2}\frac{ds}{s}  \notag \\ &\leq &\frac{1}{\Gamma \left( 1/2\right) }\int_{1}^{t}\left( \ln \frac{t}{s}\right) ^{-1/2}\frac{ds}{s}  \notag \\ &=&\frac{2}{\sqrt{\pi }}\left( \ln t\right) ^{1/2}.
\end{eqnarray*}

Then, for $\lambda _{\varphi }:=\dfrac{2}{\sqrt{\pi }}$ and $\varphi \left(	t\right) =\left( \ln t\right) ^{1/2},$ condition {\rm (H3)} is satisfied. So, by {\rm Theorem \ref{teo4}} the problem has a unique
solution in $J$. Consequently, from {\rm Theorem \ref{teo6}}, {\rm	Eq.(\ref{casa21})} has Ulam-Hyers-Rassias stability.

\end{example}

\begin{example}
Taking $\psi \left( t\right) =\ln t,$ $a=1,$ $T=e$, $\alpha=1/2$ and $\beta \rightarrow 1$, we obtain a particular case of the Cauchy problem {\rm Eq.(\ref{casa20})} involving the Caputo-Hadamard fractional derivative: 
\begin{equation}\label{casa22}
\left\{ 
\begin{array}{rcl}
^{CH}\mathscr{D}_{1+}^{1/2,1;\ln t}y\left( t\right)  & = & \dfrac{\lambda }{20}\ln t^{1/2}\cos t\text{ }y\left( t\right) +\dfrac{\lambda }{20}\text{ }^{CH}\mathscr{D}_{1+}^{1/2,1;\ln t}y\left( t\right) ,\text{ }t\in \left[ 1,e\right] \\ I_{1+}^{1/2;\ln t}y\left( 1\right)  & = & 1
\end{array}
\right. 
\end{equation}
and
\begin{equation*}
f\left( t,u,v\right) =\frac{\lambda \ln t^{1/2}}{20}\cos t\text{ }u+\frac{\lambda v}{20},\text{ }u,v\in \mathbb{R}
,\text{ }t\in \left[ 1,e\right].
\end{equation*}

Note that $f$ is a set of continuous functions. Then, for all $u,v,\overline{u},\overline{v}\in \mathbb{R}
$ and $t\in \left[ 1,e\right]$, we get
\begin{eqnarray*}
\left\vert f\left( t,u,v\right) -f\left( t,\overline{u},\overline{v}\right) \right\vert  &=&\left\vert \frac{\lambda \ln t^{1/2}}{20}\cos t\text{ }u+\frac{\lambda v}{20}-\frac{\lambda \ln t^{1/2}}{20}\cos t\text{ }\overline{u}
-\frac{\lambda \overline{v}}{20}\right\vert   \notag \\ &\leq &\frac{\lambda }{20}\left( \left\vert u-\overline{u}\right\vert+\left\vert v-\overline{v}\right\vert \right) .
\end{eqnarray*}

Then,  condition {\rm (H2)} is satisfied with $k=l=\lambda /20$. So, for $\varphi \left( t\right) =\ln t^{1/2},$ we have
\begin{eqnarray*}
I_{1+}^{1/2;\ln t}\varphi \left( t\right) &=&\frac{1}{\Gamma \left( 1/2\right) }\int_{1}^{t}\left( \ln \frac{t}{s}\right) ^{-1/2}\left( \ln s\right) ^{1/2}\frac{ds}{s}  \notag \\ &\leq &\frac{1}{\Gamma \left( 1/2\right) }\int_{1}^{t}\left( \ln \frac{t}{s}\right) ^{-1/2}\frac{ds}{s}  \notag \\
&=&\frac{2}{\sqrt{\pi }}\left( \ln t\right) ^{1/2}.
\end{eqnarray*}

Then, for $\lambda _{\varphi }:=\dfrac{2}{\sqrt{\pi }}$ and $\varphi\left(t\right) =\left( \ln t\right) ^{1/2},$  condition {\rm (H3)} is satisfied. So, by {\rm Theorem \ref{teo4}} the problem has a unique
solution in $J$. Consequently, from {\rm Theorem \ref{teo6}}, {\rm	Eq.(\ref{casa22})} has Ulam-Hyers-Rassias stability.

\end{example}

\begin{example} For $\psi \left( t\right) =t^{\rho }$, $0\leq \rho \leq 4$, $a=0,$ $T=1,$ $\alpha =1/2$ and $\beta \rightarrow 1,$ we get a	particular case of the Cauchy problem {\rm Eq.(\ref{casa20})} using the Caputo-Katugampola fractional derivative:
\begin{equation}\label{casa23}
\left\{ 
\begin{array}{rcl}
^{CK}D_{0+}^{1/2,1;t^{\rho }}y\left( t\right)  & = & \dfrac{\lambda }{20}t^{\rho /2}\cos t\text{ }y\left( t\right) +\dfrac{\lambda }{20}\text{ }^{CK}D_{0+}^{1/2,1;t^{\rho }}y\left( t\right) ,\text{ }t\in \left[ 0,1\right]\\ 
I_{0+}^{1/2;t^{\rho }}y\left( 0\right)  & = & 1\end{array}\right. 
\end{equation}
and
\begin{equation*}
f\left( t,u,v\right) =\frac{\lambda t^{\rho /2}}{20}\cos t\text{ }u+\frac{\lambda v}{20},\text{ }u,v\in \mathbb{R}
,\text{ }t\in \left[ 0,1\right].
\end{equation*}

Note that $f$ is a set of continuous functions. Then, for all $u,v,\overline{u},\overline{v}\in \mathbb{R}
$ and $t\in \left[ 0,1\right]$, we have
\begin{eqnarray*}
\left\vert f\left( t,u,v\right) -f\left( t,\overline{u},\overline{v}\right)\right\vert  &=&\left\vert \frac{\lambda t^{\rho /2}}{20}\cos t\text{ }u+\frac{\lambda v}{20}-\frac{\lambda t^{\rho /2}}{20}\cos t\text{ }\overline{u}
-\frac{\lambda \overline{v}}{20}\right\vert   \notag \\ &\leq &\frac{\lambda }{20}\left( \left\vert u-\overline{u}\right\vert +\left\vert v-\overline{v}\right\vert \right) .
\end{eqnarray*}

Then,  condition {\rm (H2)} is satisfied with $k=l=\lambda /20$. So, for $\varphi \left( t\right) =t^{\rho /2}$, we get
\begin{eqnarray*}
^{\rho }I_{0+}^{1/2;t^{\rho }}\varphi \left( t\right)  &=&\frac{\rho ^{1/2}}{\Gamma \left( 1/2\right) }\int_{0}^{t}\left( t^{\rho }-s^{\rho }\right) ^{-1/2}s^{\rho +1/2}ds  \notag \\ &\leq &\frac{\rho ^{1/2}}{\Gamma \left( 1/2\right) }\int_{0}^{t}\left( t^{\rho }-s^{\rho }\right) ^{-1/2}ds  \notag \\ &=&\frac{2\rho ^{1/2}}{\sqrt{\pi }\left( \rho +1\right) }t^{\rho /2}.
\end{eqnarray*}

Then, for $\lambda _{\varphi }:=\dfrac{2\rho ^{1/2}}{\sqrt{\pi }\left( \rho +1\right) }$ and $\varphi \left( t\right) =t^{\rho /2}$,  condition	{\rm (H3)} is satisfied. So, by {\rm Theorem \ref{teo4}}, the problem
has a unique solution in $J$. Consequently, from {\rm Theorem \ref{teo6}},  {\rm Eq.(\ref{casa20})} has Ulam-Hyers-Rassias stability.

\end{example}

%%%%%%%%%%%%%%%%%%%%%%%%%%%%%%%%%%%%%%%%%%%%%%%%%%%%%%%%%%%%%%%%%%%%%%%%%%%%%%%%%%%%%%%%%%%%%%%%%%%%%%%%%%%%%%%%%%%%%%%%%%%%%%%%%%%%%%%%%%%%%%%%%%%%%%%%%%%%%%%%%%%%%%%%%%%%%%%%%%%%%%%%%%%%%%%%%%%%%%%%%%%%%%%%%%%%%%%%%%%%%%%%%%%%%%%%%%%%

\section{Concluding remarks}

In the first part of this paper we studied the existence and uniqueness of solution of a nonlinear Cauchy problem. In the second part we investigated the Ulam-Hyers and Ulam-Hyers-Rassias stabilities of such solutions and discussed some particular cases. Some important points were presented in the body of the article, in particular, the big gain obtained by studying the Cauchy problem using the $\psi$-Hilfer fractional derivative. We discussed some examples of stability, presenting the stability interval of each Cauchy problem and
comparing them. 

An interesting extension of our studies would be to discuss global attractivity for the nonlinear Cauchy problem using the $\psi$-Hilfer fractional derivative \cite{FUT,FUT1,FUT2}. This topic will be the subject of a forthcoming paper \cite{oi}.

\bibliography{ref}
\bibliographystyle{plain}

\end{document}